\newfont{\msbm}{msbm10 scaled\magstephalf}
\newtheorem*{claim}{Claim:}
\newtheorem{theorem}{Theorem}[section]
\newtheorem{corollary}[theorem]{Corollary}
\newtheorem{lemma}[theorem]{Lemma}
\newtheorem{definition}[theorem]{Definition}
\newtheorem{conjecture}[theorem]{Conjecture}
\newtheorem{question}[theorem]{Question}
\def\b1K{\mbox{\boldmath $K$}_{-1}}
\def\bK{\mbox{\boldmath $K$}}
\newbox\noforkbox \newdimen\forklinewidth
\noforkbox\hbox{\lower 2pt\box1\lower
2pt\box0\relax}
\def\sub'm{\prec_{\bK'}}
\def\grpf #1 #2{{\rm grp}_{#2}(#1)}
\def\fldf #1 #2{{\rm fld}_{#2}(#1)}
\def\dclf #1 #2{{\rm dcl}_{#2}(#1)}
\def\rclf #1 #2{{\rm rcl}_{#2}(#1)}
\def\aclf #1 #2{{\rm acl}_{#2}(#1)}
\def\acff #1 #2{{\rm acf}_{#2}(#1)}
\def\strf #1 #2{{\rm str}_{#2}(#1)}
\def\tclf #1 #2{{\rm acf}_{#2}(#1)}
\def\hbar{{\bf h}}
\date{\today}
\newtheorem{prob}{Problem}
\newenvironment{proof}{\paragraph{Proof:}}{\hfill$\square$\\}
\begin{document}

\title{ Some Cases of the Erd\H{o}s-Lov\'asz Tihany Conjecture for Claw-free Graphs}
\author{Sean Longbrake\footnote{Dept. of Mathematics, Emory University,  {\tt sean.longbrake@emory.edu}}, Juvaria Tariq\footnote{Dept. of Mathematics, Emory University  {\tt jtariq@emory.edu}}}

\maketitle

\begin{abstract}
    The Erd\H{o}s-Lov\'asz Tihany Conjecture states that any $G$ with chromatic number $\chi(G)  = s + t - 1 > \omega(G)$, with $s,t \geq 2$ can be split into two vertex-disjoint subgraphs of chromatic number $s, t$ respectively. We prove this conjecture for pairs $(s, t)$ if $t \leq s + 2$, whenever $G$ has a $K_s$, and for pairs $(s, t)$ if $t \leq 4 s - 3$, whenever $G$ contains a $K_s$ and is claw-free. We also prove the Erd\H{o}s-Lov\'asz Tihany Conjecture for the pair $(3, 10)$ for claw-free graphs.
\end{abstract}

\section{Introduction}

Throughout this work, $G$ is always a simple graph. We let $K_{\ell}$ denote the complete graph on $\ell$ vertices. We let $K_{s, t}$ denote the complete bipartite graph with one part of size $s$ and one of size $t$.  We let $\chi(G)$ be the least number of colors needed to color the vertices of a graph $G$ such that no edge is monochromatic. We let $\omega(G)$ be the largest $\ell$ such that $K_{\ell} \subset G$. For a set $U \subseteq V(G)$, we let $G[U]$ be the subgraph induced by $U$. We say $G$ is \textit{claw-free} if there is no set $W$ such that $G[W] \cong K_{1, 3}$. For other definitions, see the standard reference \cite{West}. 

We offer now a brief history of the Erd\H{o}s-Lov\'asz Tihany conjecture, with some particularly relevant results highlighted. We direct the reader to \cite{Song} for more details. 

The Erd\H{o}s-Lov\'asz Tihany Conjecture states: 
\begin{conjecture}[Erd\H{o}s-Lov\'asz Tihany \cite{E}]
For $t \geq s \geq 2$, for any graph $G$ with chromatic number $\chi(G)  = s + t - 1 > \omega(G)$ there exists a vertex partition $S \sqcup T = V(G)$ such that $\chi(G[S]) \geq s$ and $\chi(G[T]) \geq t$. 
\end{conjecture}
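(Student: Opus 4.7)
The plan is to take a minimum counterexample $G$ (minimum first in $|V(G)|$, then in $|E(G)|$) and derive a contradiction. First I would observe that $G$ must be $(s+t-1)$-vertex-critical, so every proper induced subgraph $G'$ has $\chi(G') \leq s+t-2$. Standard criticality arguments then give a lower bound $\delta(G) \geq s+t-2$, rule out small clique-cutsets, and force the neighborhood of every vertex $v$ to carry chromatic number at least $s+t-2$. The goal is to use these constraints to produce a partition $V(G) = S \sqcup T$ with $\chi(G[S]) \geq s$ and $\chi(G[T]) \geq t$.

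The main strategy I would pursue is to locate a ``seed'' induced subgraph $H \subseteq G$ with $\chi(H) = t$ (for instance a minimal $t$-chromatic subgraph, or a $K_t$ if one exists) and argue that $\chi(G - V(H)) \geq s$. Equivalently, if every $t$-chromatic induced subgraph $H$ satisfied $\chi(G - V(H)) \leq s-1$, one would attempt to stitch together an $(s-1)$-coloring of $G - V(H)$ with a $(t-1)$-coloring of $H$ to exhibit an $(s+t-2)$-coloring of $G$, contradicting $\chi(G) = s+t-1$. The obstruction to such a stitch is a pattern of cross-edges which must embed a $K_{s+t-1}$, and one would try to use $\omega(G) < s+t-1$ together with the minimum-degree bound to rule that pattern out.

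When $G$ is guaranteed to contain a $K_s$, as in the paper's main cases, the natural first seed is $S_0 := V(K_s)$, and the task reduces to showing that either $\chi(G - S_0) \geq t$, or $S_0$ can be perturbed by a small set of vertices (preserving $\chi(G[S]) \geq s$) so that the remaining graph picks up the missing chromatic number. For the claw-free cases $t \leq 4s-3$ and $(3,10)$, I would further exploit that each vertex-neighborhood is a union of two cliques; this tight local structure, combined with the Chudnovsky--Seymour decomposition of claw-free graphs into line graphs, quasi-line graphs, and a handful of exceptional families, should reduce the problem to a finite collection of structured cases where the perturbation of $S_0$ can be carried out explicitly and the required chromatic bound on $G - S$ verified by a direct coloring argument.

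The hard part, and the reason the conjecture has resisted attack for decades, is the regime in which $G$ contains no clique close to $\omega(G) = s+t-2$ and no structural hook such as claw-freeness. In that regime neither a $K_s$-anchored seed nor a local neighborhood decomposition is available, and the bare hypothesis $\chi(G) = s+t-1 > \omega(G)$ does not obviously force any partition of the required type; the stitching argument above collapses because the $(s-1)$-coloring of $G - V(H)$ has too much freedom at the boundary to be controlled. I expect that pushing beyond the cases treated in this paper would require either a genuinely new global tool (probabilistic, topological, or list-chromatic) or a sharp strengthening of vertex-criticality that rules out sparse $(s+t-1)$-critical graphs directly.
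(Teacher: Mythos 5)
The statement you are addressing is the full Erd\H{o}s--Lov\'asz Tihany Conjecture, which the paper does not prove and which remains open; the paper only establishes restricted cases (pairs with $t \le s+2$ when $G$ contains a $K_s$, claw-free graphs with $t \le 4s-3$, and the claw-free pair $(3,10)$), and it does so through the notion of $K_\ell$-critical graphs, degree bounds on common neighborhoods of cliques, generalized Kempe chains, and, in the claw-free setting, the bound $\alpha(G[N(T)]) \le 2$ together with $R(3,3)=6$ --- not through the Chudnovsky--Seymour decomposition you invoke. So your proposal cannot be judged as a correct proof of the stated result, and indeed it is not one: it is a research outline whose decisive step is missing, as your own final paragraph concedes.

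The concrete gap is in the stitching argument. If $H$ is an induced subgraph with $\chi(H) = t$ and you assume for contradiction that $\chi(G - V(H)) \le s-1$, then coloring $G - V(H)$ with $s-1$ colors and $H$ with $t$ colors on a disjoint palette gives a proper coloring of $G$ with $(s-1) + t = s+t-1$ colors, which is perfectly consistent with $\chi(G) = s+t-1$; there is no contradiction to extract. To reach $s+t-2$ colors you would need to color $H$ with only $t-1$ colors, which is impossible by the choice of $H$, so the entire force of the argument is pushed onto the unproven assertion that the ``pattern of cross-edges'' obstructing a savings of one color must embed a $K_{s+t-1}$. That assertion is exactly the content of the conjecture in disguise, and nothing in the minimum-degree or criticality facts you list supplies it. Even in the cases the paper does handle, the work is not a perturbation of a $K_s$ seed: the paper shows a minimal counterexample contains a $K_s$-critical subgraph and then derives structural contradictions (e.g.\ Lemma~\ref{vertexdegree} forcing $d(x) \ge k+2\ell-3$ against the claw-free upper bound $d(x) \le 2(k-\ell-1)$ from Lemma~\ref{chrom}), which is a genuinely different and more elementary mechanism than the structure-theorem case analysis you propose and leave unexecuted.
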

While this conjecture is quite old and has received much attention over the last fifty years, the exact result is known only for the following pairs: $(2, 2), (2, 3), (2, 4), (3, 3), (3, 4), (3, 5)$ \cite{BJ, M, S1, S2}. 

A particular interesting case is for claw-free graphs. The most general result is the following by Chudnovsky, Fradkin, and Plumettaz \cite{CFP}.  

\begin{theorem}
Let $G$ be a claw-free graph with $\chi(G) > w(G)$. Then, there exists a clique $K$ with $|V(K)| \leq 5$ such that $\chi(G - K) > \chi(G) - |V(K)|$. 
\end{theorem}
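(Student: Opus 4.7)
The plan is to argue by contradiction via a vertex-minimal counterexample $G$. A standard reduction shows I may assume $G$ is vertex-critical with $k = \chi(G) > \omega(G)$, so that $\chi(G - v) = k - 1$ for every $v \in V(G)$. The task is then to produce a clique $K \subseteq V(G)$ with $|V(K)| \le 5$ satisfying $\chi(G - K) \ge k - |V(K)| + 1$.

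The first ingredient I would exploit is the local structure of claw-free graphs: for every $v \in V(G)$, the induced neighborhood $G[N(v)]$ has independence number at most $2$, so its complement is triangle-free. Fixing any proper $(k-1)$-coloring $c$ of $G - v$, each color class must intersect $N(v)$ (otherwise $c$ extends to $v$, contradicting criticality), and it does so in an independent set of at most two vertices. This gives a rigid picture of how the $k-1$ colors sit on $N(v)$; combined with a Ramsey-type bound ($R(3,t)$ forces large cliques in graphs with $\alpha \le 2$), it guarantees sizeable cliques inside $N[v]$.

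The construction of $K$ then proceeds by iterative clique extension. Start with $K_1 = \{v_1\}$ for a suitably chosen vertex $v_1$ in a maximum clique. Given a clique $K_i$ with $i \le 4$, suppose $\chi(G - K_i) \le k - i$, otherwise $K_i$ already works. Applying the coloring analysis above to a $(k - i)$-coloring of $G - K_i$ and to a vertex completely joined to $K_i$ (whose existence follows from the neighborhood structure), one finds a candidate $v_{i+1}$ whose adjacency to all of $K_i$ is forced; the claw-free condition plus a Kempe-chain / recoloring step is the main lever that rules out obstructions. Setting $K_{i+1} = K_i \cup \{v_{i+1}\}$ then extends the clique by one.

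The core difficulty, and likely the deepest part of the proof, is showing the process must terminate at or before $i = 5$. This is where I expect the Chudnovsky--Seymour structure theorem for claw-free graphs to enter: it decomposes $G$ either into a basic claw-free type (the line graph of a multigraph, a fuzzy circular interval graph, an antiprismatic graph, or a small sporadic exception) or via a clique cutset, a $2$-join, or a thickening. In each basic case, the bound of $5$ is verified directly: for line graphs it reduces to a chromatic-index statement handled by Vizing / Goldberg-type arguments, since cliques correspond to stars or triangles in the underlying multigraph; for circular interval graphs, interval overlap considerations yield the bound; for antiprismatic graphs, the uniformly small independence number makes $5$-cliques abundant. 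Composite structures are handled by routine induction using the minimality of $G$. The delicate point, which I anticipate requires the most care, is aligning the clique $K$ chosen inside a basic block with the global coloring of $G$ across the decomposition seams so that removing $K$ truly costs fewer than $|V(K)|$ colors.
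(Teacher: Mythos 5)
You should first note that the paper you are writing into does not prove this statement at all: it is quoted as background (Theorem 1.2) from Chudnovsky, Fradkin, and Plumettaz \cite{CFP}, whose published argument is a long, highly technical analysis built on the Chudnovsky--Seymour structure theory for claw-free graphs. So the only question is whether your proposal stands on its own as a proof, and it does not: it is a research plan whose decisive steps are asserted or ``anticipated'' rather than carried out. Concretely: (i) in the iterative clique-extension step you claim that from a $(k-i)$-coloring of $G-K_i$ ``one finds a candidate $v_{i+1}$ whose adjacency to all of $K_i$ is forced,'' but no argument is given for why such a vertex exists or why Kempe-chain recoloring rules out the obstructions; this is precisely the kind of statement that fails in general and needs the full strength of the structural hypotheses. (ii) The termination of the process at $i\le 5$ --- which \emph{is} the theorem --- is simply deferred to the structure theorem, and the verifications you sketch in the basic classes are not correct as stated or not obviously sufficient: for line graphs the relevant result is the Kostochka--Stiebitz theorem on partitioning multigraphs, not a routine Vizing/Goldberg chromatic-index computation; for fuzzy circular interval and antiprismatic graphs no actual argument is offered.

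(iii) Most seriously, the composite cases are waved off as ``routine induction using the minimality of $G$.'' They are not routine: the chromatic number of a graph built by a $2$-join, a hex-join, or a thickening is not controlled in any simple way by the pieces, and transferring a clique $K$ found in one block to a clique of $G$ whose removal still costs at least $|V(K)|$ colors globally is exactly where the bulk of the work in \cite{CFP} lies --- you yourself flag this as ``the delicate point'' without resolving it. A proof proposal that identifies the hard step and leaves it open has a genuine gap, not a fixable omission of detail. If you want a self-contained contribution in the spirit of this paper, the realistic target is the weaker, $K_s$-containing cases treated here (Theorems~\ref{main1}--\ref{main3}), which avoid the structure theorem entirely via the criticality and common-neighborhood lemmas of Section~\ref{lemmas}.
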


In generality, Kostochka and Stiebitz proved the conjecture under the condition $G$ is a line graph \cite{KS}. This was extended to the following by Balogh, Kostochka, Prince, and Stiebitz \cite{BKPS}: 
\begin{theorem}
Any quasi-line graph $G$  with chromatic number $\chi(G)  = s + t - 1 > \omega(G)$ can be split into two disjoint subgraphs of chromatic number $s, t$ respectively. Furthermore, if $\alpha(G) = 2$ and $\chi(G)  = s + t - 1 > \omega(G)$, $G$ can be split into two vertex-disjoint subgraphs of chromatic number $s, t$ respectively.
\end{theorem}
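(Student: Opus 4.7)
The plan is to attack each half of the statement by a minimum-counterexample argument. Pick $G$ of smallest order with $\chi(G)=s+t-1>\omega(G)$, either quasi-line or with $\alpha(G)=2$, admitting no partition into subgraphs with chromatic numbers $s$ and $t$. Standard reductions give that $G$ is $(s+t-1)$-vertex-critical, has minimum degree at least $s+t-2$, and every proper induced subgraph $H \subsetneq G$ does admit the desired split whenever $\chi(H)=s'+t'-1>\omega(H)$. The goal is to use the extra structural hypothesis to contradict the failure of the split.

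For the quasi-line assertion I would invoke the Chudnovsky--Seymour structure theorem for quasi-line graphs: every connected quasi-line graph is either a fuzzy circular interval graph or admits a nontrivial composition of linear interval strips. In the composition case one localises the obstruction inside a single strip; because strips essentially behave like line graphs of multigraphs, the Kostochka--Stiebitz result \cite{KS} supplies a valid edge partition that lifts to a vertex partition of $G$, contradicting minimality. In the fuzzy circular interval case one uses the linearly ordered clique cover provided by the interval model, selects a maximum clique $K$, and applies Theorem~1.2 repeatedly to peel off cliques of controlled size while keeping track of how $\chi$ drops, building up the partition $(S,T)$ with $\chi(G[S])\ge s$ and $\chi(G[T])\ge t$ piece by piece.

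For the $\alpha(G)=2$ assertion the key observation is that every colour class has size at most $2$, so $|V(G)| \ge 2(s+t-1)-1$, while $\omega(G) \le s+t-2$. Since $\alpha(G)=2$, the non-neighbourhood of any vertex is a clique, and for a maximum clique $K$ the complement of $V(G)\setminus K$ is triangle-free, hence $V(G)\setminus K$ is the union of two cliques $A$ and $B$. One then distributes $K, A, B$ between $S$ and $T$ via a pigeonhole on the chromatic deficit $(s+t-1)-\omega(G)$ to ensure each side carries enough chromatic number; since $\alpha(G)=2$ forces $\chi$ on any induced subgraph to be at least half its vertex count, keeping each side sufficiently large guarantees $\chi(G[S])\ge s$ and $\chi(G[T])\ge t$.

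The main obstacle I anticipate is the fuzzy circular interval subcase. Unlike compositions of linear interval strips there is no clean reduction to edge-colouring a multigraph, and the wrap-around of the circular model means one cannot simply split the vertices along a single arc without losing chromatic control. A quantitative refinement of Theorem~1.2 tailored to the circular interval model---guaranteeing that the removed clique can be chosen on a prescribed arc and that $\chi$ drops by exactly its size---appears necessary to carry out the peeling cleanly. The $\alpha(G)=2$ case, by contrast, should reduce to fairly routine bookkeeping once the two-clique decomposition of the complement is in hand.
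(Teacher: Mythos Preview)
This theorem is not proved in the paper at all: it is quoted as a result of Balogh, Kostochka, Prince, and Stiebitz \cite{BKPS}, with no argument supplied. So there is no ``paper's own proof'' for your proposal to be compared against. What you have written is a sketch aimed at reproducing the original \cite{BKPS} argument, not at anything the present paper does.

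On the substance of your sketch: the overall architecture for the quasi-line half (reduce to a minimal counterexample, invoke the Chudnovsky--Seymour structure theorem, treat compositions of strips via the Kostochka--Stiebitz line-graph result, and handle fuzzy circular interval graphs separately) is the right shape and matches the approach in \cite{BKPS}. But as you yourself flag, the fuzzy circular interval case is where the real work lies, and your proposal does not yet contain the idea that makes it go through; ``peel off cliques using Theorem~1.2'' is not enough, since Theorem~1.2 only promises a clique of size at most five with a chromatic drop, not a clique on a prescribed arc with an exact drop. For the $\alpha(G)=2$ half, your two-clique decomposition of $V(G)\setminus K$ and the pigeonhole on vertex counts is the standard line, and with care it can be completed; the delicate point you have not addressed is guaranteeing $\chi(G[S])\ge s$ rather than merely $|S|\ge 2s-1$, which requires tracking how the cliques $K$, $A$, $B$ interact and not just their sizes.
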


This work was extended by Song \cite{Song2} to the following. 

\begin{theorem}
If $\alpha(G)\geq 3$ and $G$ has no hole of length between $4$ and $2 \alpha (G) - 1$ and $\chi(G)  = s + t - 1 > \omega(G)$, $G$ can be split into two vertex-disjoint subgraphs of chromatic number $s, t$ respectively.
\end{theorem}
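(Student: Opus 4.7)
The plan is to argue by induction on $|V(G)|$, choosing $G$ to be a vertex-minimum counterexample, so that $G$ is $(s+t-1)$-vertex-critical with $\delta(G) \geq s+t-2$. I would then invoke the Gallai--Dirac structure of critical graphs: the subgraph induced by vertices of degree exactly $s+t-2$ is a disjoint union of Gallai trees whose blocks are odd cycles or complete graphs. This gives a handle on where new cliques or independent sets can live inside $G$.

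Next, I would fix a maximum independent set $I = \{v_1, \ldots, v_\alpha\}$ with $\alpha = \alpha(G) \geq 3$ and study the neighborhoods $N(v_i)$. The hole hypothesis is the main structural lever: no induced cycle has length in $[4, 2\alpha - 1]$. Any induced path of moderate length between two non-adjacent vertices must therefore be short-circuited by a chord, since otherwise the path together with a short connecting piece would form a forbidden hole. This forces substantial clique-like overlap among the $N(v_i)$ and restricts how any long induced cycle threads through $V(G)\setminus I$. Note that $G$ cannot be chordal, since chordal graphs are perfect and would contradict $\chi(G) > \omega(G)$; thus at least one induced cycle $C$ of length $\geq 2\alpha$ exists, and by independence it must spend most of its length alternating between $I$ and near-consecutive neighborhoods.

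I would then combine this with the chromatic gap. Since $\omega(G) \leq s+t-2$, any maximum clique has size at most $s+t-2$, and minimality of $G$ says that for every $K \cong K_s$ in $G$ we must have $\chi(G - K) \leq t-1$ (else $(K,\, V(G)\setminus K)$ is the desired partition, contradicting the choice of $G$). The goal is to use the dense neighborhood structure from the previous step, together with the Gallai-tree description, to locate a vertex $v_i \in I$ whose closed neighborhood $N[v_i]$ supports chromatic number at least $t$, and a disjoint $K_s$ accounting for the $S$-side; the combination of the hole-length constraint and criticality should force such a configuration to exist.

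The main obstacle will be the hybrid nature of the hole-length hypothesis: it forbids only intermediate-length induced cycles while tolerating triangles and very long ones. Converting this selective restriction into a usable density statement on the $N(v_i)$, and then stitching that information to the Gallai-tree structure from criticality, is the technical heart of the argument. I expect the proof to proceed by a case analysis on a fixed long induced cycle $C$ (which must exist by the chordal remark above): the rigid way $C$ must wrap around $I$ should provide the combinatorial room either to extract the partition directly, or to locate a reducible configuration contradicting minimality of $G$.
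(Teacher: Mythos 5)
This theorem is not proved in the paper at all: it is quoted as background, with the proof residing in Song's paper \cite{Song2}. So your sketch has to stand on its own, and it does not. The decisive step --- ``the combination of the hole-length constraint and criticality should force such a configuration to exist'' --- is exactly the content of the theorem and is nowhere argued. Everything before it is standard preparation (a vertex-minimum counterexample is $(s+t-1)$-critical, $\delta(G)\ge s+t-2$, the Gallai-tree structure of the low-degree vertices, and $\chi(G-K)\le t-1$ for every $K\cong K_s$), but none of it is then used to produce two disjoint subgraphs of chromatic number at least $s$ and at least $t$; the Gallai-tree structure in particular is never connected to the independent set $I$ or to the neighborhoods $N(v_i)$. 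There is also a genuine logical gap in the plan itself: since the only hypothesis is $\omega(G)\le s+t-2$, the graph need not contain any $K_s$ at all, so both the minimality observation about $\chi(G-K)$ and the intended ``disjoint $K_s$ accounting for the $S$-side'' may be vacuous; any correct proof must handle the $S$-side without assuming it is a clique (this is precisely why the present paper's own results are stated only for graphs containing a $K_s$, a restriction Song's theorem does not have).

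The structural consequences you draw from the hole hypothesis are also too loose to carry the weight placed on them. ``Any induced path of moderate length must be short-circuited by a chord'' is not meaningful as stated (induced paths have no chords), and closing such a path through a connecting piece does not automatically produce an \emph{induced} cycle, so no forbidden hole arises without further argument; likewise ``the hole must spend most of its length alternating between $I$ and near-consecutive neighborhoods'' is not justified. The one clean consequence available here --- every hole has length exactly $2\alpha$ or $2\alpha+1$, because a hole of length at least $2\alpha+2$ would contain an independent set of size $\alpha+1$ --- is absent from your sketch, and it is this kind of observation, combined with the Mozhan--Stiebitz colour-class swapping machinery used by Kostochka--Stiebitz and Balogh--Kostochka--Prince--Stiebitz, that drives the published proof, rather than Gallai trees or a case analysis along a long hole. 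As it stands, your proposal is a plausible research plan rather than a proof: the heart of the argument is still missing.
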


As noted by Erd\H{o}s and Lovasz  if $s = 2$, the Erd\H{os}-Lov\'asz Tihany conjecture is equivalent to the following: 

\begin{conjecture} [Double-Critical Graph Conjecture \cite{E}]
If $G$ is a graph such that removing every edge reduces the chromatic number by two, then $G$ is a complete graph. 
\end{conjecture}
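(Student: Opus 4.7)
The plan is to approach the Double-Critical Graph Conjecture by minimum counterexample. Suppose $G$ is a non-complete double-critical graph with $\chi(G) = k$ as small as possible. Since the conjecture is known for $k \le 5$ (Mozhan, Stiebitz), we may assume $k \ge 6$. For any edge $uv$, take a proper $(k-2)$-coloring $\phi$ of $G - \{u,v\}$; this exists by double-criticality. If $u$ had no neighbor in some color class $C_i$ of $\phi$, then assigning color $i$ to $u$ and a fresh $(k-1)$st color to $v$ would give a $(k-1)$-coloring of $G$, contradicting $\chi(G) = k$. Hence each of $u, v$ meets every color class of $\phi$, so $\delta(G) \ge k-1$; a sharper pigeonhole argument in the spirit of Kawarabayashi--Pedersen--Toft strengthens this to $\delta(G) \ge k - 1 + \Omega(\sqrt{k})$, giving room to maneuver.

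Next I would fix a non-edge $uv$ (which exists because $G$ is not complete) and study the contraction $G' = G/uv$. Any proper $k$-coloring of $G'$ corresponds to a proper $k$-coloring of $G$ in which $u$ and $v$ share a color, so $\chi(G') = k$ and $G'$ inherits a form of criticality. The goal would be to upgrade the double-criticality of $G$ into a strong enough property of $G'$ that the minimality of $k$ forces $G'$ to be complete; this would produce a $K_k$-subgraph of $G$, contradicting non-completeness. In parallel I would pick $x \in N(u) \setminus N(v)$ and $y \in N(v) \setminus N(u)$, then compare $(k-2)$-colorings of $G - \{u, x\}$ and $G - \{v, y\}$, trying to splice them on the common part of $G$ into a $(k-1)$-coloring and derive a direct contradiction.

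The main obstacle is precisely what has kept this conjecture open since 1968. Local arguments using one or two $(k-2)$-colorings saturate quickly, because double-criticality is very strong locally --- forcing large minimum degree and dense clique-like structure at every edge --- yet almost silent about non-edges, which is where the desired contradiction must live. A successful attack most likely needs a global obstruction, plausibly a Hadwiger-type minor argument extracting a $K_k$-minor from $G$ and then upgrading it to an actual $K_k$-subgraph using the extra structure that double-criticality provides on each potential branch-set. The plan above identifies where such a global ingredient would have to be injected, but does not itself supply it; the resolution of this step is what I would expect to be the true heart of any proof.
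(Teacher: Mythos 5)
The statement you were asked about is a \emph{conjecture} in the paper, not a theorem: the Double-Critical Graph Conjecture is the $s=2$ case of the Erd\H{o}s--Lov\'asz Tihany conjecture and is open in general (it is known only for $\chi(G)\le 5$ in full generality, and up to $\chi(G)\le 8$ for claw-free graphs by Rolek and Song). The paper offers no proof of it; it only proves special cases of a related conjecture ($K_\ell$-criticality) under additional hypotheses such as $\chi(G)\le 2\ell+1$ or claw-freeness. So there is no ``paper proof'' to compare yours against.

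Judged as a proof attempt, your proposal is not a proof and you say so yourself: the first paragraph (every endpoint of a removed edge meets every color class of a $(k-2)$-coloring, hence $\delta(G)\ge k-1$) is correct and is the standard starting point, essentially the argument behind Lemma~2.2 of the paper in the case $\ell=2$. But the two continuation ideas both have concrete gaps. Contracting a non-edge $uv$ does give $\chi(G/uv)\ge k$, but $G/uv$ need not be double-critical (removing an edge incident to the merged vertex only corresponds to removing one of possibly two edges of $G$, and the chromatic number need not drop by two), so minimality of $k$ cannot be invoked for $G'$; moreover even if $G'$ were complete, that yields a $K_k$-\emph{minor}, not a $K_k$-subgraph, so no contradiction with non-completeness follows without an additional (currently unknown) upgrading step. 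Likewise, splicing $(k-2)$-colorings of $G-\{u,x\}$ and $G-\{v,y\}$ has no mechanism for ensuring compatibility on the overlap; this is exactly the kind of local argument you correctly observe saturates. In short, your writeup is an honest survey of why the conjecture is hard rather than a proof, which is consistent with its status in the paper as an open problem.
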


This variant has received much attention over the years. In particular, Huang and Yu \cite{HY} proved: 

\begin{theorem}
If $G$ is a claw-free graph of chromatic number six,  such that removing every edge reduces the chromatic number by two, then $G$ is a complete graph
\end{theorem}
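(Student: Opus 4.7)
The plan is to argue by contradiction, assuming $G$ is a claw-free double-critical $6$-chromatic graph with $G \neq K_6$ and deriving either a proper $5$-coloring of $G$ or a $K_6$ subgraph, both of which are contradictions. Since a double-critical graph is edge-critical, hence vertex-critical, I start from $\delta(G) \geq 5$. Moreover $\omega(G) \leq 5$: if $K_6 \subseteq G$, any vertex $w \in V(G) \setminus V(K_6)$ has some incident edge $uw$; either $u \in V(K_6)$ and $G - u - w$ contains a $K_5$, or $u \notin V(K_6)$ and $G - u - w$ contains the whole $K_6$, both forcing $\chi(G - u - w) \geq 5$, contradicting double-criticality.

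The first step is the standard \emph{coloring lemma}: for every edge $uv \in E(G)$ and every proper $4$-coloring $c$ of $G - \{u, v\}$, each of the four color classes meets both $N(u) \setminus \{v\}$ and $N(v) \setminus \{u\}$, for otherwise $c$ extends to a $5$-coloring of $G$ by giving $u$ or $v$ a missing color. Claw-freeness gives $\alpha(G[N(w)]) \leq 2$ for every $w$, so each color class hits $N(u)$ in at most two vertices, and combining with the lemma forces $5 \leq \deg(w) \leq 9$ for every $w$. I would then sharpen this rigidity through Kempe-chain swaps in $G - \{u, v\}$: whenever some color class meets $N(u) \setminus \{v\}$ in a single vertex $x$ whose $(c(x), j)$-Kempe component avoids the required $N(v)$-vertex of color $j$, swapping yields a $4$-coloring of $G - \{u, v\}$ that extends to a $5$-coloring of $G$. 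Propagating such swaps forces the color classes on $N(u) \cup N(v)$ into a tightly intertwined pattern and gives strong lower bounds on $|N(u) \cap N(v)|$ for every edge $uv$.

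The final stage is a case analysis on $\omega(G) \in \{2, 3, 4, 5\}$. The cases $\omega(G) \leq 3$ should be ruled out directly: a claw-free triangle-free graph has maximum degree at most $2$ and hence chromatic number at most $3$, so $\omega(G) = 2$ is impossible, and the $\omega(G) = 3$ case is handled by a degeneracy count on each neighborhood combined with the coloring lemma. For $\omega(G) \in \{4, 5\}$, I would fix a maximum clique $K$ and analyze how each external vertex $v \notin K$ attaches to $K$: the set $N(v) \cap K$ is trivially a clique, and claw-freeness applied at the vertices of $K$ restricts the distribution of pairwise non-adjacent external vertices. Combining these local constraints with the color-class rigidity from the previous step should produce either an extension to a $5$-coloring of $G$ or a $K_{\omega(G)+1}$, both contradictions.

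The hard part will be the $\omega(G) = 5$ case: the coloring-lemma and Kempe-chain constraints are already tight, so the bulk of the work falls on the local claw-free analysis near a fixed $K_5$. I expect the crux to be showing that for any edge $uv$ whose endpoints both have many neighbors in a fixed $K_5$, the rigid alignment of color classes in a $4$-coloring of $G - \{u, v\}$ forces some external vertex to be adjacent to all of $K_5$, extending it to a $K_6$ and contradicting $\omega(G) \leq 5$. The delicate point is the simultaneous use of claw-freeness at several vertices of $K_5$ while tracking how many Kempe-chain patterns survive after the neighborhood-size constraints are imposed.
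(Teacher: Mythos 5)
Your proposal sets up correct preliminaries (double-criticality gives vertex-criticality and $\delta(G)\geq 5$; the argument that $\omega(G)\leq 5$ for a non-complete counterexample is fine; the coloring lemma for $4$-colorings of $G-\{u,v\}$ is standard; and $\alpha(G[N(w)])\leq 2$ from claw-freeness does give $\deg(w)\leq 9$). But after that the proof is only a plan, not a proof: the Kempe-chain ``rigidity'' is never made precise, the $\omega(G)=3$ case is waved at with ``a degeneracy count,'' and the load-bearing cases $\omega(G)\in\{4,5\}$ are left entirely as expectations (``should produce either an extension to a $5$-coloring or a $K_{\omega(G)+1}$,'' ``I expect the crux to be\dots''). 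No contradiction is actually derived in any of the main cases, and these are exactly the cases where all the difficulty of the Huang--Yu theorem lives, so as it stands there is a genuine gap rather than a different but complete argument.

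For contrast, the route this paper takes (the $\ell=2$, $k=6$ instance of Theorem~\ref{main2}, after reducing a double-critical graph to a $K_2$-critical subgraph via the first lemma of Section~\ref{lemmas}) needs no case analysis on $\omega(G)$ and no Kempe chains: the key trick you are missing is to delete an edge \emph{disjoint from} $N[x]$ rather than one incident with $x$. If $G\not\cong K_6$, Lemma~\ref{avoid} supplies an edge $S$ each of whose endpoints $x$ has some edge $L$ with $x\notin V(L)$ and $L\not\subseteq N(x)$; Lemma~\ref{chrom} then pushes $L$ entirely outside $N(x)$, so a $4$-coloring of $G-L$ restricted to $N[x]$ shows $\chi(G[N(x)])\leq 3$, and with $\alpha(G[N(x)])\leq 2$ this gives $d(x)\leq 6$ --- much sharper than your bound of $9$. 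On the other side, Lemmas~\ref{vertexdegree} and~\ref{outside} give $d(x)\geq k+3\ell-5=7$ for a suitable such $x$, an immediate contradiction. If you want to salvage your outline, either carry out the $\omega\in\{4,5\}$ analysis in full (essentially redoing Huang--Yu/Rolek--Song), or adopt the ``delete a clique avoiding $N[x]$'' counting, which collapses the case analysis entirely.
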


Building on this work and work by Kawarabayashi, Pedersen, and Toft \cite{KPT}, Rolek and Song \cite{RS} were able to prove the following: 

\begin{theorem}
If $G$ is a claw-free graph of chromatic number less than or equal to eight,  such that removing every edge reduces the chromatic number by two, then $G$ is a complete graph.
\end{theorem}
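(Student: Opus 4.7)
The plan is to argue by contradiction: suppose $G$ is claw-free, double-critical, has $\chi(G) = k \leq 8$, and is not complete. The cases $k \leq 5$ are covered by the known instances of Erd\H{o}s--Lov\'asz Tihany for $(2, k - 1)$, and Huang--Yu resolves $k = 6$ for claw-free graphs, so only $k \in \{7, 8\}$ remain to rule out.

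First I would assemble the standard structural lemmas for double-critical graphs. A Kempe-chain argument applied to any $k$-coloring of $G - uv$ forces all $k - 2$ colors distinct from the common color of $u = v$ to appear on $N(u) \cap N(v)$, giving $|N(u) \cap N(v)| \geq k - 2$ and, after a little more work, $\delta(G) \geq k$. Claw-freeness then contributes two rigidity statements: $\alpha(N(w)) \leq 2$ for every vertex $w$, and $\alpha(N(u) \cap N(v)) \leq 2$ for every edge $uv$, since a triple of pairwise nonadjacent common neighbors would form a claw with apex $u$. In particular each common neighborhood is the complement of a triangle-free graph on at least $k - 2 \geq 5$ vertices, which is already very restrictive.

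The core argument then takes a vertex $v$ of minimum degree, analyzes $N(v)$ using $\alpha(N(v)) \leq 2$, and performs repeated Kempe swaps between the color classes meeting $N(v)$. Either these swaps propagate enough edges to force a $K_k$ in $G$, contradicting $\omega(G) < k$, or they manufacture an induced claw, contradicting the hypothesis. The reductions of Kawarabayashi--Pedersen--Toft, combined with Huang--Yu applied to suitably chosen induced subgraphs of $G - v$, should cut the list of residual configurations down to a finite set of small claw-free graphs which can be dispatched directly.

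The hard part will be the case $k = 8$ with $\omega(G) = 7$. Here common neighborhoods have size at least $6$, and the possible shapes admit many more subcases than for $k = 7$, with mixed clique-cover types like $(3,3),(2,4),(3,4),(2,5)$ all surviving the basic counts. Eliminating these requires coordinating Kempe swaps across several common neighborhoods simultaneously, and I expect the bulk of the proof to live in verifying, case by case, that every configuration which fails to produce a $K_8$ eventually forces an induced $K_{1,3}$.
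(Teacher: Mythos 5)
This statement is not actually proved in the paper: it is quoted in the introduction as a theorem of Rolek and Song \cite{RS}, cited as background. So the relevant question is whether your proposal would stand on its own as a proof, and it does not. Your reduction is fine as far as it goes: $k\leq 5$ follows from the known $(2,t)$ cases of Erd\H{o}s--Lov\'asz Tihany, $k=6$ is Huang--Yu, and the standard lemmas you list (every $(k-2)$-coloring of $G-u-v$ puts all $k-2$ colors on $N(u)\cap N(v)$, hence $|N(u)\cap N(v)|\geq k-2$; claw-freeness forces $\alpha(N(w))\leq 2$ and $\alpha(N(u)\cap N(v))\leq 2$) are correct and are indeed the starting point of all work in this area. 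But the entire content of the theorem is the elimination of $k=7$ and $k=8$, and there your text only describes an intention: ``either the Kempe swaps force a $K_k$ or they manufacture a claw,'' the residual configurations ``should'' reduce to a finite list, and the $k=8$ subcases are something you ``expect'' to verify. No configuration is specified, no swap is exhibited, and no reason is given why the claimed dichotomy (clique versus claw) must occur. That case analysis is precisely the substance of the Rolek--Song paper; asserting its shape is not a proof of it.

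Two further cautions. First, you cannot close the gap by appealing to the present paper's machinery: its $K_\ell$-critical results specialize at $\ell=2$ to chromatic number at most $2\ell+1=5$ (Theorem~\ref{main1}) and, for claw-free graphs, at most $5\ell-4=6$ (Theorem~\ref{main2}), so they recover only the already-known range, not $k=7,8$; the paper's new claw-free results concern $\ell=3$. Second, small inaccuracies in your lemma list would matter once you start the case analysis: for a non-complete double-critical $k$-chromatic graph the known minimum degree bound (Kawarabayashi--Pedersen--Toft) is $\delta(G)\geq k+1$, not merely $\delta(G)\geq k$, and the $k=7$ case is not ``residual bookkeeping'' --- it already requires a genuine argument beyond the $k=6$ case. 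As it stands, the proposal is a plausible research plan, not a proof.
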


In light of this work, we make the following definition

\begin{definition}
For $\ell \geq 2$, we say a graph $G$ is $K_{\ell}$-critical if it satisfies the following three conditions: 
\begin{enumerate}[label=(\roman*)]
\item $G$ has a $K_{\ell}$ as a subgraph. 
\item $G$ is critical, i.e. removing any vertex reduces the chromatic number of $G$ by one. 
\item Removing the vertex set of any $K_{\ell}$ reduces the chromatic number of $G$ by $\ell$. 
\end{enumerate}
\end{definition}

The first two conditions are to remove some trivial examples from the family, such as taking the disjoint union of a $K_{\ell}$-critical graph with chromatic number $k$ with a $K_{\ell}$-free graph of chromatic number $k - \ell$, or taking a $K_{\ell}$-free graph. Note that $K_2$-critical graphs are double-critical graphs.

Note that if $G$ is a counterexample to the Erd\H{o}s-Lov\'asz Tihany Conjecture for a pair $(s, t)$ and contains a $K_s$ as a subgraph, then $G$ contains a $K_s$-critical subgraph of the same chromatic number. Indeed, if $\chi (G - S) > s+ t - 1 - s$ for any copy $S$ of $K_s$, we have found a partition satisfying the Erd\H{o}s-Lov\'asz Tihany Conjecture. As $\chi(G - S) \geq \chi(G) - |V(S)|$, for all subgraphs $S$, we see that if $G$ is a counterexample, then $\chi(G - S) = \chi(G) - s$ for all copies $S$ of $K_s$. In particular, we will prove that any graph with the property that removing any $K_s$ reduces the chromatic number by $s$ contains a $K_s$-critical graph as an induced subgraph.

In light of this, we make the following conjecture. 

\begin{conjecture}\label{klcritical}
If $G$ is $K_{\ell}$-critical for some $\ell \geq 2$, then $G$ is a complete graph. 
\end{conjecture}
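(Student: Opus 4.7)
The plan is to attempt induction on the chromatic number $k = \chi(G)$. The base case is $k = \ell$: then $G \supseteq K_\ell$ and $\chi(G) = \ell$ force $G = K_\ell$ by vertex-criticality, since any vertex outside $K_\ell$ could be removed without lowering $\chi$. For the inductive step, suppose toward contradiction that $G$ is a $K_\ell$-critical counterexample with $\chi(G) = k > \ell$ and $G \neq K_k$, chosen to minimize $|V(G)|$ among all counterexamples at chromatic number $k$.

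The first structural step is to exploit the two criticality hypotheses jointly. From vertex-criticality one gets $\delta(G) \geq k-1$. From $K_\ell$-criticality, for every $K_\ell$-subgraph $K$ there is a proper $(k-\ell)$-coloring $c$ of $G-K$; and for each $v \in V(K)$ the neighborhood of $v$ in $G-K$ must meet all $k-\ell$ color classes of $c$, since otherwise we could color $v$ with a missing color and greedily extend to the remaining $\ell-1$ vertices of $K$ using $\ell-1$ fresh colors, producing a $(k-1)$-coloring of $G$. This gives a ``rainbow neighborhood'' property that every vertex in a $K_\ell$ must enjoy, under every $(k-\ell)$-coloring of the complement of that $K_\ell$.

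The central step is to propagate a contradiction from an assumed non-edge. Pick non-adjacent $u,v \in V(G)$, and pick a $K_\ell$-subgraph $K$ containing $u$ (if no $K_\ell$ contains $u$, an independent short argument using vertex-criticality and minimality of $G$ removes $u$). Fix a $(k-\ell)$-coloring $c$ of $G-K$ and analyze the Kempe chains in $c$ between the color of $v$ and each of the other $k-\ell-1$ colors. The rainbow property at $u$ should let us swap a two-color Kempe component touching $u$ to free a color at $u$ unless each such component also reaches $v$; carrying this argument out simultaneously for each of the $\ell$ vertices of $K$ forces a highly restricted bipartite structure between $N(u)$ and $N(v)$. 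The goal is then to extract from this structure either a proper $(k-1)$-coloring of $G$ (contradicting $\chi(G) = k$) or a $K_\ell$ whose deletion fails to drop $\chi$ by $\ell$ (contradicting $K_\ell$-criticality), or alternatively a smaller $K_\ell$-critical subgraph violating minimality.

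The main obstacle is precisely the obstacle that has kept the Erd\H{o}s--Lov\'asz--Tihany conjecture open for fifty years and the $\ell = 2$ case (the double-critical graph conjecture) unresolved beyond $\chi \leq 8$: Kempe-chain and counting arguments alone do not yield enough combinatorial control when $k$ is large, and the extractions described above seem to require some extra structural hypothesis to terminate. A realistic short-term target is therefore to restrict the ambient graph class, for example using the Chudnovsky--Fradkin--Plumettaz theorem stated above to reduce, in the claw-free case, to analyzing cliques of size at most $5$, and to combine this with the rainbow-neighborhood condition for specific pairs $(s,t)$, which is the route the paper in fact pursues.
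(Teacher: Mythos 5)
The statement you are trying to prove is stated in the paper as a \emph{conjecture} (Conjecture~\ref{klcritical}), not a theorem: the paper offers no proof of it, and only establishes the special cases $\chi(G)\leq 2\ell+1$ (Theorem~\ref{main1}), $\chi(G)\leq 5\ell-4$ for claw-free $G$ (Theorem~\ref{main2}), and the triangle-critical claw-free case $\chi(G)=12$ (Theorem~\ref{main3}). Note that already for $\ell=2$ the conjecture is exactly the double-critical graph conjecture, which is open beyond chromatic number eight; so a complete proof here would be a major result, and no blind attempt should be expected to close it.

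Measured against that, your proposal is not a proof and you essentially say so yourself. The preliminary observations are fine: the base case $k=\ell$ is correct, and the ``rainbow neighborhood'' property is genuine (it is the paper's Lemma~\ref{deg}: in any $(k-\ell)$-coloring of $G-L$, every color class meets $N(L)$). But the central step --- propagating Kempe chains from a non-edge $uv$, forcing a ``highly restricted bipartite structure'' between $N(u)$ and $N(v)$, and then ``extracting'' either a $(k-1)$-coloring, a bad $K_\ell$, or a smaller counterexample --- is stated as a goal, not carried out, and your final paragraph concedes that these extractions do not terminate without additional hypotheses. There are also local problems even in the scaffolding: induction on $k$ has no usable inductive hypothesis, since deleting vertices or a $K_\ell$ from a $K_\ell$-critical graph does not generally leave a $K_\ell$-critical graph of smaller chromatic number (the paper has to work, in Lemma~2.1, just to find a $K_\ell$-critical subgraph under a weaker hypothesis); and the side remark that a vertex $u$ lying on no $K_\ell$ can be removed ``by minimality'' does not work, because vertex-criticality forces $\chi(G-u)=k-1$, so $G-u$ is not a counterexample at chromatic number $k$ and minimality says nothing. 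The constructive suggestion in your last paragraph --- restrict the graph class and combine the rainbow condition with structural lemmas for bounded $k$ --- is precisely the route the paper takes, via Lemmas~\ref{deg}--\ref{outside} and degree bounds such as $d(x)\leq 2(k-\ell-1)$ from claw-freeness, but those arguments only reach the bounded ranges of $k$ in Theorems~\ref{main1}--\ref{main3}, not the full conjecture.
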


Note that while a proof of this conjecture would imply Erd\H{o}s-Lov\'asz Tihany for graphs containing a $K_s$ as a subgraph, the other direction does not hold.

 In \cite{P}, Pedersen offered a similar definition that requires edges to lie on a $K_{\ell}$. In this setting, he proved Conjecture~\ref{klcritical} for $\chi(G) \leq 6$ and $\ell = 3$.  In our work, we drop this requirement that edges lie on a $K_\ell$ and are able to reprove this result, as seen in Corollary~\ref{cor}.

In this language, our main results are the following: 
\begin{theorem}~\label{main1}
If $G$ is a $K_{\ell}$-critical graph with $\chi(G) \leq 2 \ell + 1$, then $G$ is a complete graph. 
\end{theorem}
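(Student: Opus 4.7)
The plan is to prove the theorem by induction on $k := \chi(G)$. For the base case $k = \ell$, condition (i) provides a $K_\ell$ subgraph $K$ of $G$, and (iii) forces $\chi(G - K) = k - \ell = 0$, so $V(G) = V(K)$ and $G = K_\ell$.

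For the inductive step, assume $\ell < k \leq 2\ell + 1$ and let $G$ be $K_\ell$-critical with $\chi(G) = k$. Suppose for contradiction that $G \neq K_k$; since $G$ is $k$-critical, containing a $K_k$ would force $G = K_k$, so $\omega(G) \leq k - 1 \leq 2\ell$.

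The first step is a color-completeness lemma: for every $K_\ell$ subgraph $K = \{v_1, \ldots, v_\ell\}$ and every proper $(k-\ell)$-coloring $c$ of $G - K$ (which exists by (iii)), each $v_j$ has a neighbor of every color $i \in \{1, \ldots, k-\ell\}$. Indeed, if $v_j$ missed color $i$, setting $c(v_j) := i$ and assigning the remaining $\ell - 1$ vertices of $K$ pairwise-distinct new colors would produce a proper $(k-1)$-coloring of $G$, contradicting $\chi(G) = k$.

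Set $r := k - \ell \in \{1, \ldots, \ell + 1\}$. When $r = 1$, $G - K$ is an independent set, and criticality gives $\delta(G) \geq \ell$; since $G - K \neq \emptyset$ (else $\chi(G) = \ell$), any $u \in G - K$ has all its $\ell$ neighbors in $K$, so $K \cup \{u\}$ is a $K_{\ell+1}$, contradicting $\omega(G) \leq \ell$. For $r \geq 2$ the plan is a Kempe-chain/recoloring argument: by color completeness, any $(k-\ell)$-coloring of $G - K$ must continue to show every color to every $v_j$ even after a Kempe swap on any color pair $\{i, i'\}$. Combined with the constraint $\omega(G) \leq 2\ell$, which caps how many $v_j$'s a single vertex of $G - K$ can see, this rigidity should force a $(k-1)$-coloring of $G$ to exist, the desired contradiction. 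A natural case split is on whether $\omega(G) = \ell$ or $\omega(G) > \ell$: in the latter case, pick a maximum clique $K^* \supseteq K$ and exploit that the colors of $K^* \setminus K$ are forced to be $|K^*| - \ell$ distinct ones in the reduced coloring.

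The main obstacle is carrying out the Kempe/recoloring step uniformly for $r$ up to $\ell + 1$. The extremal case $k = 2\ell + 1$ (so $r = \ell + 1$) is likely the most delicate: the reduced coloring uses the maximum number of colors allowed, so pinpointing a Kempe swap that produces a missing color at some $v_j$ requires careful balancing of chain structure against the max-clique bound.
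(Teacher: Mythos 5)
There is a genuine gap: your proposal only proves the theorem for $k\in\{\ell,\ell+1\}$. The base case and the case $r=k-\ell=1$ are fine (and your per-vertex colour-completeness observation is correct), but the entire range $\ell+2\le k\le 2\ell+1$ --- which is where the content of the theorem lies, and in particular the extremal value $k=2\ell+1$ that is the new result here --- is left as an unexecuted plan. For $r\ge 2$ you only assert that a Kempe-chain/recolouring argument ``should force'' a $(k-1)$-colouring of $G$; no chain is exhibited, no recolouring is specified, and the one concrete mechanism you invoke is not sound: $\omega(G)\le 2\ell$ does \emph{not} cap how many vertices of a fixed $K_\ell$ a vertex $u\notin K$ can see, since $K\cup\{u\}$ is only a clique of size $\ell+1\le 2\ell$. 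Two smaller issues: your ``induction on $k$'' never actually uses the inductive hypothesis, and your colour-completeness lemma is weaker than what is needed --- the paper's Lemma~\ref{deg} shows that every colour class of a $(k-\ell)$-colouring of $G-L$ contains a \emph{common} neighbour of $L$ (so $d(L)\ge k-\ell$), and it is this stronger, common-neighbourhood form that drives all subsequent arguments.

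For comparison, the paper does not use Kempe swaps at all for this theorem. For $k\le 2\ell$ it proves a clique-growing statement (Theorem~\ref{usefullemma}): if a $K_\ell$-critical graph of chromatic number $k$ contains $K_{k-\ell+1}$, then $G\cong K_k$; this is obtained by repeatedly exchanging one vertex of a clique for a common neighbour supplied by Lemma~\ref{deg}, until a $K_\ell$ disjoint from a $K_{k-\ell+1}$ appears, contradicting criticality. Since any $K_\ell$ extends to a $K_{\ell+1}$ when $k>\ell$, this settles all $k\le 2\ell$ (Corollary~\ref{cor}). For $k=2\ell+1$ the paper fixes a $K_\ell$, $X$, and an $(\ell+1)$-colouring of $G-X$, and grows a sequence $a_1,a_2,\dots$ in $G-X$ in which each new vertex is a common neighbour of the previous $\ell$; this union of $K_{\ell+1}$'s glued along $K_\ell$'s is uniquely $(\ell+1)$-colourable, and the final $K_\ell$ of the sequence, having at most one common neighbour in $X$ (by $K_{\ell+2}$-freeness from Theorem~\ref{usefullemma}) and a monochromatic common neighbourhood inside the sequence, sees at most two colour classes of a colouring of $G$ minus that clique, contradicting Lemma~\ref{deg}. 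Some argument of this calibre is exactly what is missing from your $r\ge 2$ step, so as written the proposal does not establish the theorem.
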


\begin{theorem}~\label{main2}
If $G$ is a $K_{\ell}$-critical claw-free graph with $\chi(G) \leq 5 \ell - 4$, then $G$ is a complete graph. 
\end{theorem}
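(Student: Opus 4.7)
The plan is to argue by contradiction. Suppose $G$ is claw-free, $K_\ell$-critical, not complete, with $\chi(G) = k \leq 5\ell - 4$; Theorem~\ref{main1} yields $k \geq 2\ell + 2$. Fix a $K_\ell$-subgraph $S = \{s_1, \ldots, s_\ell\}$. By condition (iii) of $K_\ell$-criticality, $\chi(G - S) = k - \ell$, and I fix a proper $(k - \ell)$-coloring $\varphi$ of $G - S$ with color classes $C_1, \ldots, C_{k - \ell}$. A color-saturation argument shows that for every $s_i$ and every color $c$, some vertex of $N(s_i) \cap (V(G) \setminus S)$ is colored $c$: otherwise one could color $s_i$ with $c$ and give the remaining $\ell - 1$ vertices of $S$ fresh distinct colors, producing a proper $(k-1)$-coloring of $G$. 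Hence $M_i := N(s_i) \setminus S$ satisfies $|M_i| \geq k - \ell$ with every color represented.

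Next I invoke claw-freeness. For each $i$, $\alpha(N(s_i)) \leq 2$, so $|M_i \cap C_c| \leq 2$ for every $c$ (since $M_i \cap C_c$ is an independent subset of $N(s_i)$). Moreover, for each $j \neq i$, the set $B_j^i := M_i \setminus N(s_j)$ is a clique --- two non-adjacent vertices in $B_j^i$ together with $s_j$ would form an independent triple in $N(s_i)$ --- and $B_j^i \cup \{s_i\}$ is a clique, so $|B_j^i| \leq \omega(G) - 1$. The same independence argument shows that for two distinct $u, v \in M_i \cap C_c$, the missing-neighbor sets $\{j : s_j \not\sim u\}$ and $\{j : s_j \not\sim v\}$ are disjoint.

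I then reduce to $\omega(G) = \ell$. If $\omega(G) \geq \ell + 1$, the Chudnovsky-Fradkin-Plumettaz theorem produces a clique $K$ of size at most $5$ with $\chi(G - K) \geq \chi(G) - |K| + 1$; $K_\ell$-criticality forces $|K| \neq \ell$, and by working inside an $\omega$-clique one extracts from $K$ an appropriate $K_\ell$ whose removal violates criticality. Once $\omega(G) = \ell$, every $u \in M_i$ is non-adjacent to some $s_j$ with $j \neq i$ (else $\{u\} \cup S$ is a $K_{\ell + 1}$), so $M_i = \bigcup_{j \neq i} B_j^i$. A careful count using $|B_j^i| \leq \ell - 1$, $|M_i \cap C_c| \leq 2$, and the disjointness of missing-neighbor sets yields the sharp bound $|M_i| \leq 4(\ell - 1)$; combined with $|M_i| \geq k - \ell$, this gives $k \leq 5\ell - 4$, and tracking equality in the count together with $k \geq 2\ell + 2$ produces the desired contradiction.

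The main obstacle is the final counting step: deriving $|M_i| \leq 4(\ell - 1)$ rather than the naive cover bound $(\ell - 1)^2$ or color bound $2(k - \ell)$, both of which are too weak for $\ell \geq 5$. The sharp bound requires combining all three structural constraints simultaneously --- the clique-cover $M_i = \bigcup_j B_j^i$, the size restriction $|M_i \cap C_c| \leq 2$, and the disjointness of missing-neighbor sets --- weighing each vertex $u \in M_i$ by $|\{j : s_j \not\sim u\}|$ and treating the two-vertex color classes as the key amplifying configuration against the clique cover.
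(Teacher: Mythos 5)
Your local observations are all correct: each color class meets $N(s_i)$ in at most two vertices, each $B_j^i$ is a clique, and two same-colored vertices of $M_i$ have disjoint missing-neighbor sets. But the argument has three genuine gaps, two of them fatal. First, the reduction to $\omega(G)=\ell$ does not work. The Chudnovsky--Fradkin--Plumettaz clique $K$ has size at most $5$, but there is no reason it extends to a $K_\ell$; if it does not (or if $|K|>\ell$), removing a $K_\ell$ yields no contradiction with criticality, since $\chi(G-L)>\chi(G)-|K|$ is compatible with $\chi(G-L)=\chi(G)-\ell$. Nothing in $K_\ell$-criticality forces $\omega(G)=\ell$; the best available bound is $\omega(G)\le k-\ell$ (Theorem~\ref{usefullemma}), and without $\omega(G)=\ell$ your key estimate $|B_j^i|\le \ell-1$ degrades to $|B_j^i|\le \omega(G)-1$, which is far too weak. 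Second, the pivotal inequality $|M_i|\le 4(\ell-1)$ is never proved, and it does not follow from the three constraints you list: taking all missing sets to be singletons, one can satisfy the clique-cover bound, the two-per-color bound, and the disjointness condition simultaneously with as many as $\min\{(\ell-1)^2,\,2(k-\ell)\}$ vertices, which exceeds $4(\ell-1)$ already for moderate $\ell$ (e.g.\ $\ell=10$, $k=5\ell-4$). Third, even granting $|M_i|\le 4(\ell-1)$, combining it with $|M_i|\ge k-\ell$ gives only $k\le 5\ell-4$, which is exactly the hypothesis rather than a contradiction; the ``tracking equality together with $k\ge 2\ell+2$'' step that is supposed to close the argument is not carried out, and it is not clear how it could be.

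For comparison, the paper argues quite differently and entirely with degree bounds. It uses Lemma~\ref{avoid} to find a special $S\cong K_\ell$ each of whose vertices $x$ misses some $K_\ell$, so Lemma~\ref{chrom} plus claw-freeness give the upper bound $d(x)\le 2(k-\ell-1)$ (this is the part closest in spirit to your $|M_i\cap C_c|\le 2$ observation). The real work is a matching lower bound: for a nonedge $u,v$ in $N(S)$, Lemma~\ref{vertexdegree} gives $d(x,u)\ge k+2\ell-6$ for $x\in V(S)$, and Lemma~\ref{outside} (applied to the clique $S'$ obtained by swapping $v$ in) produces $\ell$ additional neighbors of $x$ outside $N[u]$, whence $d(x)\ge k+3\ell-5$ and therefore $k\ge 5\ell-3$, contradicting $k\le 5\ell-4$. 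Your lower bound on $|N(s_i)\setminus S|$ is only $k-\ell$ (color saturation), which is too weak to play this role; the missing idea is to lower-bound degrees via common neighborhoods of edges inside cliques and vertices forced outside a closed neighborhood, rather than to upper-bound $|N(s_i)|$ by a counting argument over color classes.
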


Throughout, we call a graph $G$ \textit{triangle-critical} if it is $K_3$-critical. In this case, we can extend the result one step further. 

\begin{theorem}~\label{main3}
If $G$ is a triangle-critical claw-free graph with $\chi(G) = 12$, then $G$ is a complete graph. 
\end{theorem}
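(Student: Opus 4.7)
The plan is to argue by contradiction. Suppose $G$ is a triangle-critical claw-free graph with $\chi(G)=12$ that is not complete. By Theorem~\ref{main2} applied with $\ell = 3$, the conclusion is already known for $\chi(G) \leq 11$, so $\chi(G) = 12$ is the first open case and sits exactly one unit above the bound already established; the argument should therefore extend and refine the method used there.

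First I would record the structural consequences of the hypotheses. Since $G$ is critical, $\delta(G) \geq 11$; since $G$ is non-complete and critical, $\chi(G) > \omega(G)$. Since $G$ is claw-free, $\alpha(G[N(v)]) \leq 2$ for every $v$, and the Ramsey identity $R(3,4)=9$ then forces $\omega(G[N(v)]) \geq 4$ whenever $|N(v)| \geq 9$, whence $\omega(G) \geq 5$. Applying the theorem of Chudnovsky, Fradkin, and Plumettaz quoted in the introduction now yields a clique $K \subseteq G$ with $|V(K)| \leq 5$ and $\chi(G - K) > 12 - |V(K)|$. The case $|V(K)|=1$ is impossible by criticality, and the case $|V(K)|=3$ is impossible by triangle-criticality, which forces $\chi(G - K) = 9$ and contradicts the required lower bound of $10$. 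Hence $|V(K)| \in \{2,4,5\}$, and the plan is to refute each of these in turn.

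In each remaining case, the engine is the following observation applied to a triangle $T = \{a,b,c\}$ — taken inside $K$ when $|V(K)| \in \{4,5\}$, or chosen separately from the guaranteed $K_5$ when $|V(K)|=2$. One has $\chi(G - T) = 9$, and for any proper 9-coloring $\phi$ of $G - T$ with color classes $V_1, \ldots, V_9$ the assumption $\chi(G) = 12$ forces $\phi(N(v) \setminus T) = [9]$ for every $v \in T$: otherwise one could color some $v_0 \in T$ by a missing color from $[9]$ and the other two vertices of $T$ by two fresh colors, yielding an 11-coloring of $G$. Claw-freeness then pins down $1 \leq |N(v) \cap V_i| \leq 2$ for all $v \in T$ and $i \in [9]$, while the vertices of $K \setminus T$ (or, in the $|V(K)|=2$ case, the side identity $\chi(G - K) = 11$) contribute further forced constraints on the color pattern around $T$.

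The hardest step will be turning this rigid color distribution into an actual contradiction. The natural tool is a Kempe-chain recoloring argument: if, for some pair of colors $(i,j)$, the $(i,j)$-Kempe component through a vertex of $N(v) \cap V_i$ avoids $N(v) \cap V_j$, then swapping colors on that component produces a 9-coloring of $G - T$ in which color $i$ is absent from $N(v) \setminus T$, contradicting the observation above. Propagating this constraint across all pairs $(i,j)$ and all $v \in T$, and combining it with the forced color pattern contributed by $K \setminus T$, should either exhibit an induced $K_{1,3}$ (contradicting claw-freeness) or, upon choosing $\phi$ to minimize a suitable monovariant such as $\sum_i |V_i|^2$, force a contradiction with the extremality of $\phi$. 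The real bookkeeping difficulty lies in the subcase where $|N(v) \cap V_i| = 2$ for many pairs $(v, i)$, since a single Kempe swap no longer empties color $i$ from $N(v)$; this is where the extra room afforded by the jump from $5\ell - 4$ to $5\ell - 3$ is consumed, and the closing case analysis will depend delicately on the structure of $K$ and on how the color classes of $\phi$ intersect $\bigcup_{v \in T} N(v)$.
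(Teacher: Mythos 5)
Your proposal is a plan rather than a proof, and the gap sits exactly where the work has to happen. The reduction via the Chudnovsky--Fradkin--Plumettaz theorem to the cases $|V(K)| \in \{2,4,5\}$ does not advance toward a contradiction: triangle-criticality says nothing about removing cliques of size $2$, $4$ or $5$, so the surviving cases are not inconsistent with the hypotheses on their own. (The case $|V(K)|=2$ could in fact be killed, but only by a separate argument: if an edge $uv$ lay on no triangle, claw-freeness plus $\delta(G)\geq 11$ would make $N[u]\setminus\{v\}$ a clique of size at least $11$, and then Theorem~\ref{usefullemma} forces $G\cong K_{12}$; hence every edge lies on a triangle and $\chi(G-\{u,v\})\leq 10$. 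You do not give this, and for $|V(K)|\in\{4,5\}$ the CFP conclusion $\chi(G-K)\geq 9$, resp.\ $\geq 8$, is simply compatible with a triangle-critical graph, so "refuting each of these in turn" has no target.) After the reduction, your "engine" --- that every $9$-coloring of $G-T$ places all nine colors in $N(v)\setminus T$ for each $v\in T$, with $|N(v)\cap V_i|\leq 2$ by claw-freeness --- is correct but far too weak by itself, and your closing paragraph concedes this: the Kempe-swap/monovariant step is stated as something that "should" produce a claw or an extremality contradiction, with no argument for why the $|N(v)\cap V_i|=2$ subcase ever terminates. That unexecuted step is the entire difficulty.

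For comparison, the paper's proof never invokes CFP. It uses the internal lemmas (Lemmas~\ref{avoid}, \ref{chrom}, \ref{missneigh}, \ref{outside}, \ref{vertexdegree}) to bound $d(a)\leq 16$ for a suitable vertex $a$, forces the existence of a triangle $T=\{a,b,c\}$ with $d(T)$ exactly $9$, and then builds an explicit configuration: vertices $x,x',y,y',z,z'$ in the pairwise common neighborhoods, and a second triangle $T'=\{a',b',c'\}$ whose adjacencies to $N(T)$ and to these six vertices are pinned down, using the generalized Kempe chains of Lemma~\ref{kempechain} only to certify specific edges $v_iv_j$ inside $N(T)$. The contradiction is then concrete: $d(T')=9$, and any $9$-coloring of $G-T'$ would have to use all nine colors on the nine vertices of $N(T')$, which forces all nine colors into the neighborhood of a single vertex $v_1$, so $\chi(G-T')\geq 10$, contradicting triangle-criticality. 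Identifying this second triangle $T'$ and its neighborhood structure is the crux, and nothing in your proposal points toward it.
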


We begin with some preliminary lemmas in Section~\ref{lemmas}. Then we will prove Theorem~\ref{main1} in Section~\ref{gen}. Section~\ref{clawfree} will cover the proof of Theorem~\ref{main2}. Finally, the proof of Theorem~\ref{main3} will be in section~\ref{clawfree12}. We will conclude the paper with an open problem.

\section{Preliminary Lemmas}\label{lemmas}
 We define for a set $S \subset V(G)$, $N(S) = \bigcap_{v \in S} N(v)$. For any subgraph $H \subset G$, we define $N(H) = N(V(H))$. We call this set the common neighborhood of $H$. We define $N[S] = N(S) \cup S$. For a subgraph $H$, let the \textit{degree} of $H$,  $d(H) = |N(H)|$.  Furthermore, for any subgraph $F\subseteq G$,  $N_F(H)$ is defined to be $N(H)\cap V(F)$.

\begin{lemma}
Every graph $G$ containing a $K_\ell$ that has the property such that $\chi(G - L) = \chi(G) - |L|$ for every copy $L$ of $K_\ell$ contains a $K_\ell$-critical subgraph $G'$ of the same chromatic number of $G$. 
\end{lemma}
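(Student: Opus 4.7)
The plan is to take $G'$ as a vertex-minimum induced subgraph of $G$ satisfying three conditions: (1) $\chi(G') = \chi(G)$; (2) $G'$ contains a $K_\ell$; and (3) $\chi(G' - L) = \chi(G') - \ell$ for every $L \cong K_\ell$ in $G'$. The graph $G$ itself satisfies (1)--(3) by the hypothesis of the lemma, so such a minimum exists. Conditions (i) and (iii) in the definition of $K_\ell$-critical are already (2) and (3), so the substantive task is to show that this $G'$ is vertex-critical.

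To that end, I would assume toward a contradiction that some $v \in V(G')$ satisfies $\chi(G' - v) = \chi(G')$ and examine $G'' := G' - v$. The key observation, and really the only subtle point, is that condition (3) is preserved under this deletion: for any $L' \cong K_\ell$ in $G''$, removing a vertex cannot raise the chromatic number, so
\[
\chi(G'' - L') \leq \chi(G' - L') = \chi(G') - \ell = \chi(G'') - \ell,
\]
and the reverse inequality is automatic. Thus $G''$ satisfies (1) and (3).

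By minimality of $G'$, the subgraph $G''$ must then fail (2); equivalently, $v$ lies in every copy of $K_\ell$ in $G'$. This yields the contradiction: picking any $L \cong K_\ell$ in $G'$ (which must contain $v$), we get
\[
\chi(G' - L) = \chi\bigl((G' - v) - (L \setminus \{v\})\bigr) \geq \chi(G' - v) - (\ell - 1) = \chi(G') - \ell + 1,
\]
contradicting (3) applied to $L$. Hence no such $v$ exists and $G'$ is $K_\ell$-critical.

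The main obstacle is choosing the right minimization. The naive choice of a chromatic-critical subgraph gives vertex-criticality for free but may destroy the $K_\ell$-reduction condition, while requiring only (3) need not produce vertex-criticality. The simultaneous three-way minimization handles both, using the fact that deleting a chromatically-redundant vertex preserves both the chromatic number and the clique-removal property, so the only way minimality can block criticality is if the redundant vertex sits in every $K_\ell$---and that is exactly the configuration ruled out by condition (3).
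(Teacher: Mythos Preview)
Your proof is correct and follows essentially the same approach as the paper. The paper phrases the construction as an iterative deletion of chromatically-redundant vertices rather than as a direct minimization, but the core content is identical: both arguments hinge on showing that deleting a vertex $v$ with $\chi(G'-v)=\chi(G')$ preserves the clique-removal property, and that such a $v$ cannot lie on any $K_\ell$ (which you derive from minimality of $G'$, and the paper derives from its preserved property together with the inequality $\chi(G_i - L) \geq \chi(G_i - v) - (\ell-1)$).
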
 
\begin{proof}
Let $G_0 = G$. Given $G_i$, let $G_{i + 1}$ be formed from $G_i$ by removing some vertex $x \in V(G_i)$ such that $\chi(G_i - x) = \chi(G_i)$. The process stops if no such $x$ remains in $G_i$, and set $G'$ to that graph.

We claim that at every stage of the process, every copy $L$ of $K_\ell$ has the property that $\chi(G_i - L) = \chi(G_i) - \ell$. In particular, this says that at no stage do we remove a vertex $x$ that lies on a $K_{\ell}$. Note that the following holds hold for all copies $L$ of $K_{\ell}$ in $G_i$. 

\begin{align*}
\chi(G - L) & \geq \chi(G_i - L) \geq \chi(G_i) - |L|\\
\chi(G) - \ell &\geq \chi(G_i - L) \geq \chi(G_i) - \ell\\
\chi(G_i) - \ell  &\geq \chi(G_i - L) \geq \chi(G_i) - \ell\\
\end{align*}

Thus, in particular every $G_i$ still has the property that removing a $K_{\ell}$ reduces the chromatic number by $\ell$. Note that by definition, $G'$ is a critical graph. By our earlier arguments, it still has a $K_{\ell}$ and in particular, is thus $K_\ell$-critical.  
\end{proof}

The following two lemmas are equivalent to Lemma $3.1$ and Lemma $3.7$ of Stiebitz \cite{S2}. We include proofs for the sake of completeness. 

\begin{lemma}\label{deg}
Let $G$ be a $K_{\ell}$-critical graph with $\chi(G) = k$. Then, $d(v) \geq k - 1$ for all $v \in V(G)$ and for any $L \subseteq G$, with $L$ a copy of $K_{\ell}$, $d(L) \geq k - \ell$. In particular in any $(k - \ell)$-coloring $\phi$ of $G - L$, for all $i \in [k - \ell]$, $\phi^{-1}(i) \cap N(L) \neq \emptyset$. 
\end{lemma}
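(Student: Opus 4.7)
The first claim, that $d(v) \geq k-1$ for every vertex $v$, is the standard criticality argument. Since $G$ is critical, $\chi(G-v) = k-1$, and if $d(v) \leq k-2$ then any $(k-1)$-coloring of $G-v$ leaves a free color in $[k-1]$ available for $v$, extending to a $(k-1)$-coloring of $G$ and contradicting $\chi(G) = k$.

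For the "in particular" clause about colorings, the plan is to prove the contrapositive via a palette-exchange argument. Fix a $(k-\ell)$-coloring $\phi$ of $G-L$, which exists by $K_\ell$-criticality, and suppose toward contradiction that some color class $\phi^{-1}(i)$ is disjoint from $N(L)$. Enumerate $L = \{v_1, \ldots, v_\ell\}$ and introduce $\ell - 1$ brand-new colors $c_1, \ldots, c_{\ell-1}$ outside $[k-\ell]$. I would color $v_j$ with $c_j$ for $j < \ell$ and $v_\ell$ with color $i$, then recolor each $u \in \phi^{-1}(i)$ as follows: since $u \notin N(L)$, some $v_{j(u)} \in L$ is not adjacent to $u$, and I assign $u$ the color used on $v_{j(u)}$ (so either some $c_j$ or $i$). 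The remaining vertices of $G - L - \phi^{-1}(i)$ keep their $\phi$-colors.

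I then have to verify this is a proper coloring of $G$. Edges inside $L$ are fine because the $v_j$ got pairwise distinct colors; edges inside $\phi^{-1}(i)$ don't exist since that color class is independent; edges between $L$ and $\phi^{-1}(i)$ are safe by the non-adjacency choice of $v_{j(u)}$; edges from $L \cup \phi^{-1}(i)$ to the rest of $G-L$ only need to be checked for color $i$, and any $G-L$ neighbor of a color-$i$ vertex already avoided color $i$ under $\phi$. This yields a proper coloring with the $k-1$ colors $[k-\ell] \cup \{c_1, \ldots, c_{\ell-1}\}$, contradicting $\chi(G) = k$. The bound $d(L) \geq k-\ell$ then follows immediately: each of the $k-\ell$ color classes of $\phi$ contributes a distinct vertex to $N(L)$.

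The only subtle point I expect is the bookkeeping that the recoloring is genuinely proper across the boundary $\partial(L \cup \phi^{-1}(i))$, but this reduces to the single observation that the only "old" color appearing on the recolored region is $i$ itself, and originally color $i$ was already absent from the neighborhoods of all vertices in $\phi^{-1}(i)$. Everything else is a matter of noting that the $c_j$ are fresh. No use of claw-freeness or of the $K_\ell$ hypothesis beyond the identity $\chi(G-L) = k-\ell$ is needed.
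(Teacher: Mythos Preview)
Your proof is correct and follows essentially the same approach as the paper. The only cosmetic difference is bookkeeping: the paper assigns $\ell$ fresh colors $k-\ell+1,\dots,k$ to all of $L$ (so that the old color $i$ becomes vacant and the palette is $[k]\setminus\{i\}$), whereas you use $\ell-1$ fresh colors together with the reused color $i$ on $v_\ell$; in both versions the key step is the identical observation that every $u\in\phi^{-1}(i)$ misses some $v_{j(u)}\in L$ and can therefore inherit its color.
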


\begin{proof}
Since $G$ is critical, for all $v$ in $V(G)$, $\chi( G - v) = k -1$. Fix a coloring of $G - v$ in $k - 1$ colors. If $v$ does not have a neighbor in every color class, then we can color $v$ with the color not used in $N(v)$. This would give a $(k - 1)$-coloring of $G$, contradicting that $\chi(G) = k $. Thus, $v$ sees a neighbor in every color class, and so has degree at least $k - 1$. 

Let $L$ be a $K_{\ell}$ in $G$ and suppose on the contrary that there is a $(k - \ell)$-coloring $\phi$ of $G - L$ where for some $i \in [ k - \ell]$, $N(L) \cap \phi^{-1}(i) = \emptyset$. Fix this $i$.

Let $V(L) = \{ v_{k - \ell + 1}, v_{k - \ell + 2}, \dots v_k \}$. Let $\psi: V(L) \rightarrow [k - \ell + 1, k] : \psi(v_j) = j$. For each vertex $w \in \phi^{-1}(i)$, there is at least one vertex $v_{j_w}$ among $V(L) $ such that $w$ is not adjacent to $v_{j_w}$. Let $f : \phi^{-1}(i) \rightarrow [k - \ell + 1, k]$ such that $f(w) = j_w $.

Define $\phi': V(G) \rightarrow [k] - \{i \}$, a coloring of $G$ as follows

$$\phi'(v) = \begin{cases} \psi(v) & v \in V(L) \\
f(v) & v \in \phi^{-1}(i) \\
\phi(v) & \text{ otherwise} \end{cases} $$  Note that $\phi'$ forms a $(k - 1)$-coloring of $G$, a contradiction to $G$ having chromatic number $k$. 

\end{proof}

\begin{lemma}\label{usefullemma}
Let $G$ be a $K_{\ell}$-critical graph with chromatic number $k$. If $G$ contains $K_{k - \ell + 1}$, then $G \cong K_k$. 
\end{lemma}

\begin{proof}
We will prove this by induction. Note that if $G$ contains $K_k$ the result follows by criticality. Suppose $ 1 \leq i \leq \ell - 1$ and $G$ has a $K_{k - i}$ as a subgraph. Then, if we can show that $G$ contains $K_{k - i+ 1}$, the result would follow. Now, by definition, $G$ has a $K_{\ell}$, so we may assume $k - i \geq \ell$. Let $X = \{x_1, x_2, \dots, x_{k - i}\}$ be the vertices of a $K_{k-i}$.  Suppose $G$ has no $K_{k - i + 1}$.  Note that $G[\{x_1, x_2, \dots, x_{\ell}\}] \cong K_{\ell}$, and let $L = G[\{x_1, x_2, \dots, x_{\ell}\}]$. Also, $\{x_{\ell +1}, \dots x_{k - i}\} \subseteq N(L)$, but since $|N(L)| \geq k - \ell$ by Lemma~\ref{deg}, we have $|N(L) - X| \geq k - \ell - (k - i -\ell) \geq i$.

Since $G$ has no $K_{k - i + 1}$, for every vertex $y$ in $N(L) - X$ there exists an $x \in V(X) - V(L)$ such that $xy$ is not an edge, so without loss of generality fix $y_1 \in N(L) - X$ such that $y_1$ is not adjacent to $x_{\ell + 1}$. Then, $G[\{y_1, x_2, \dots x_{\ell}\}] \cong K_{\ell} = L_1$, so $|N(L_1)|\geq k - \ell$. Note that $x_{\ell + 1}$ is not among the common neighbors of $L_1$, so \begin{align*}
  |N(L_1) - X| &\geq |N(L_1)| - |X - \{x_2, \dots x_{\ell}, x_{\ell + 1}\}|\\
  &\geq k - \ell - (k - i - \ell) \\
  &\geq i. 
\end{align*} Thus, there is a $y_2$ in $N(L_1) - X$. 

Continuing, if $j \leq i$, we have $L_j = G[\{y_1, y_2, \dots y_{j}, x_{j + 1}, \dots x_{\ell}\}] = L_j$ is a copy of  $K_{\ell}$, and we note that $|N(L_j) - X| \geq i + 1 - j$. Then, for all $j \leq i$, there is a $y_{j + 1} \in N(L_j) - X$.  At the end, we have found a $K_{\ell}$, $G[\{ y_1, \dots y_{i + 1}, x_{i + 2}, \dots x_{\ell}\}] = L_{i + 1}$. Let $X' = X - L_{i + 1}$. Note that $|V(X')| = k - i - (\ell - i - 1) = k - \ell + 1$ . Thus, we have found a $K_{\ell}$, namely $L_{i + 1}$, which is vertex-disjoint from a clique $X'$ of size $k - \ell + 1$,  a contradiction to $G$ being $K_{\ell}$-critical . Thus, $G$ has a $K_{k - i +1}$. 
\end{proof}

Note the following immediate corollary: 
\begin{corollary}\label{cor}
    If $G$ is a $K_{\ell}$-critical graph with $\chi(G) \leq 2 \ell$, $G$ is a complete graph. 
\end{corollary}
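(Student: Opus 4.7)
The plan is to reduce the corollary to a direct application of Theorem~\ref{usefullemma}. Let $k = \chi(G)$. Since $G$ is $K_{\ell}$-critical it contains some $L \cong K_{\ell}$, hence $k \geq \ell$. What Theorem~\ref{usefullemma} requires in order to conclude $G \cong K_k$ is that $G$ contain a $K_{k-\ell+1}$. Under the hypothesis $k \leq 2\ell$ we have $k - \ell + 1 \leq \ell + 1$, so it suffices to exhibit a $K_{\ell+1}$ in $G$ (and when $k \leq \ell$ the claim is immediate, as explained below).

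First I would handle the degenerate case $k = \ell$: criticality of $G$ gives $\chi(G - L) = \chi(G) - \ell = 0$ for every $L \cong K_{\ell}$, so $V(G) = V(L)$ and $G = K_{\ell}$. Next, assume $\ell < k \leq 2\ell$. Fix any $L \cong K_{\ell}$ in $G$. Lemma~\ref{deg} yields
\[
|N(L)| = d(L) \geq k - \ell \geq 1,
\]
so we may pick a vertex $v \in N(L)$. By the definition $N(L) = \bigcap_{u \in V(L)} N(u)$, the vertex $v$ is adjacent to every vertex of $L$, and hence $G[V(L) \cup \{v\}] \cong K_{\ell+1}$. Because $k - \ell + 1 \leq \ell + 1$, this $K_{\ell+1}$ contains a $K_{k-\ell+1}$ subgraph, so $G$ itself contains a $K_{k-\ell+1}$.

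Now apply Theorem~\ref{usefullemma} to the $K_{\ell}$-critical graph $G$ of chromatic number $k$ containing a $K_{k-\ell+1}$; it concludes $G \cong K_k$, which is exactly the statement of the corollary. There is no real obstacle here: the only ingredient needed beyond the definition of $K_{\ell}$-criticality is the lower bound $|N(L)| \geq k - \ell$ from Lemma~\ref{deg}, which furnishes the single extra vertex required to jump from the guaranteed $K_{\ell}$ up to the $K_{\ell+1}$ that triggers Theorem~\ref{usefullemma}.
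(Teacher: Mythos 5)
Your proposal is correct and follows essentially the same route as the paper: handle $\chi(G)=\ell$ directly, then use Lemma~\ref{deg} to extend a $K_{\ell}$ by a common neighbor to a $K_{\ell+1}$, which under $\chi(G)\leq 2\ell$ contains a $K_{\chi(G)-\ell+1}$, so Theorem~\ref{usefullemma} applies. Your treatment of the base case and the inequality $k-\ell+1\leq\ell+1$ is just a more explicit writing of the paper's argument.
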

\begin{proof}
Let $G$ be a $K_{\ell}$-critical graph with $\chi(G) \leq 2 \ell$.  Note that $G$ has a $K_{\ell}$. If $\chi(G) = \ell$, the result is clear. Assume then $\chi(G) > \ell$. In particular, by Lemma~\ref{deg}, we have that the $K_{\ell}$ is contained in a $K_{\ell + 1}$. Thus, by Lemma~\ref{usefullemma}, $G$ is a complete graph. 
\end{proof}

This result is a weaker version of Theorem~\ref{main1}. We will improve it in the next section. 

Following \cite{BKPS, B95, NL}, given a graph $G$ with $k$-coloring $\phi: V(G) \rightarrow [k]$ and a permutation $\pi : [k] \rightarrow [k]$ and a vertex $x \in V(G)$, we let $N_1$ to be the set of vertices adjacent to $x$ with color $\pi(\phi(x))$, $N_2$ the set of vertices adjacent to some vertex in $N_1$ with color $\pi^2(\phi(x))$, $N_3$ the set of vertices adjacent to some vertex in $N_2$ with color $\pi^3(\phi(x))$, and so on. We call $N(x, \phi, \pi) = \{x\} \cup N_1 \cup N_2 \cup \dots$ a \textit{generalized Kempe chain} from $x$ with respect to $\phi$ and $\pi$. Note that changing the color $\phi(y)$ for every $y \in N(x, \phi, \pi)$ to $\pi(\phi(y))$ defines a new $k$-coloring of $G$.

\begin{lemma}\label{kempechain}
Let $G$ be a $K_{\ell}$-critical graph and $L$ be a copy of $K_\ell$in $G$. Let $\chi(G) = k$ and $\phi$ be a $(k -\ell)$-coloring of $G - L$. Then for any nonempty repeat-free sequence $j_1, j_2, \dots j_t$ in $[k - \ell]$, and $x, y \in V(L)$, there is a path on $t + 2$ vertices starting at $x$ and ending at $y$ with the $i+1$th vertex $v$ being in $G - L$ with $\phi(v) = j_i$. 
\end{lemma}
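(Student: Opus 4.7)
My plan is to argue by contradiction using a generalized Kempe-chain swap of the type defined just before the lemma. Suppose no such path exists. For $i=1,\ldots,t$, define $A_i \subseteq V(G)\setminus V(L)$ by $A_1 = N(x)\cap \phi^{-1}(j_1)\setminus V(L)$ and $A_i = N(A_{i-1})\cap \phi^{-1}(j_i)\setminus V(L)$; the nonexistence of the path forces $N(y)\cap A_t = \emptyset$, since any $v_t \in A_t\cap N(y)$ together with the walk witnessing $v_t\in A_t$ would yield (after the standard trimming to a path among same-colored vertices) the forbidden path.

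Next I would extend $\phi$ to a proper $k$-coloring $\tilde\phi$ of $G$ by assigning $\ell$ new colors to the vertices of $V(L)$, with $c_0 := \tilde\phi(x)$ and $c_y := \tilde\phi(y)$. Define a permutation $\pi$ of the color set $[k]$ by $\pi(c_0)=j_1$, $\pi(j_i)=j_{i+1}$ for $1\le i<t$, $\pi(j_t)=c_0$, and $\pi$ the identity on the remaining colors. Because $\phi^{-1}(j_i)\subseteq V(G)\setminus V(L)$ for every $i$, the generalized Kempe chain $N(x,\tilde\phi,\pi)$ coincides with $\{x\}\cup A_1\cup\cdots\cup A_t$. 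Applying the swap produces a proper $k$-coloring $\tilde\phi'$ in which $x$ has the old color $j_1$, each $A_i$ with $i<t$ has color $j_{i+1}$, and $A_t$ has color $c_0$. Crucially, $y$ is not in the chain because its color $c_y$ lies outside the orbit $\{c_0,j_1,\ldots,j_t\}$, so $y$ still carries $c_y$ after the swap.

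Finally, I recolor $y$ with the freed color $c_0$. This is proper: $x$ now has color $j_1\ne c_0$; the other vertices of $V(L)$ have new colors distinct from $c_0$; and no $G-L$ neighbor of $y$ has color $c_0$ under $\tilde\phi'$ because $\tilde\phi'^{-1}(c_0)\cap (V(G)\setminus V(L)) = A_t$ while $N(y)\cap A_t=\emptyset$. After this move the color $c_y$ is no longer used anywhere, yielding a proper $(k-1)$-coloring of $G$ and contradicting $\chi(G)=k$.

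The main obstacle I anticipate is the bookkeeping when the sequence $j_1,\ldots,j_t$ has repetitions. Consecutive repeats make the desired path vacuously impossible for color-properness reasons, so in a minimal counterexample one may assume consecutive $j_i$'s are distinct; non-consecutive repeats must be handled by shortening the sequence (every shorter subsequence inherits the counterexample property) until $j_1,\ldots,j_t$ are pairwise distinct, at which point $\pi$ is a bona fide permutation and the orbit $c_0 \to j_1\to\cdots\to j_t\to c_0$ is a genuine cycle. Once this reduction is in hand, the key structural input is exactly $N(y)\cap A_t=\emptyset$: this is precisely what legalizes the final recoloring of $y$ and produces the $(k-1)$-coloring contradiction.
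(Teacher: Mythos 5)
Your argument is correct and is in essence the paper's own proof: you run the same generalized Kempe chain from $x$ through the color cycle $c_0 \to j_1 \to \cdots \to j_t$ and turn its failure to reach a $(k-1)$-coloring contradiction with $\chi(G)=k$; the only difference in implementation is that the paper deletes the edge $xy$, gives $x$ and $y$ the common color $k-1$, and concludes that $y$ must lie on the chain, whereas you keep $G$ intact, give all of $V(L)$ distinct new colors, swap, and then recolor $y$ with the freed color $c_0$. One caveat: your closing reduction for sequences with repeated colors does not work as stated (a shorter subsequence need not inherit the counterexample property), but this case lies equally outside the paper's proof, since the cyclic permutation $(k-1, j_1, \dots, j_t)$ also presupposes distinct $j_i$ and with consecutive repeats the conclusion is impossible for a proper coloring anyway; as the lemma is only ever applied with pairwise distinct colors, your proof covers exactly what the paper's does.
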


\begin{proof}
    Let $G'$ be the graph on $V(G)$ with edges $E(G) - \{xy\}$. Let $\phi'$ be a $(k-1)$-coloring of $G'$ extending $\phi$ and giving unique colors to every vertex of $L$ besides $x,y$, with $\phi(x) = \phi(y) = k - 1$. Let $\pi$ be the cyclic permutation defined by $(k - 1, j_1, j_2, \dots, j_t)$. If $N(x, \phi, \pi)$ does not contain $y$, then reassigning the colors by applying $\pi$ to the chain (as described above) gives a coloring of $G'$ where $x, y$ have distinct colors. Thus, this would extend to a $k - 1$ coloring of $G$ by adding back the edge $xy$, a contradiction. Therefore, $y$ must be on this generalized Kempe chain. Since only $y,x$ have color $k -1$, it follows that $G[N(x, \phi, \pi)]$ must contain a path from $x$ to $y$ of order $t + 2$ satisfying our conditions. 
\end{proof}

\begin{lemma}\label{avoid}
Let $G$ be a $K_{\ell}$-critical graph with chromatic number $k$ which is not $K_k$. Then there exists a copy $S$ of $K_{\ell + 1}$, such that for every vertex $x \in V(S)$, there is copy $L$ of $K_{\ell}$, satisfying $L \not \subseteq N[x]$. 
\end{lemma}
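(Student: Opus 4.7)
The plan is to argue by contradiction via an extremal choice of $S$. Call a vertex $x \in V(G)$ \emph{bad} if every $K_\ell$ of $G$ not containing $x$ is contained in $N(x)$---equivalently, if every $K_\ell$ of $G$ is contained in $N[x]$---and let $X$ denote the set of bad vertices. The lemma asserts that some $K_{\ell+1}$ is disjoint from $X$, so I assume for contradiction that every $K_{\ell+1}$ of $G$ meets $X$. Since $G \not\cong K_k$, one checks that $k > \ell$ (otherwise $G$ would be a $K_\ell$-critical graph on $\ell$ vertices with $\chi = \ell$, forcing $G \cong K_\ell = K_k$), so Lemma~\ref{deg} yields $|N(L')| \geq k - \ell \geq 1$ for every $K_\ell$ $L'$, and in particular $G$ contains a $K_{\ell+1}$. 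Among all $K_{\ell+1}$ subgraphs I then choose $S$ minimizing $|V(S) \cap X|$; by the contradictory assumption this minimum is at least $1$, so I fix some $x \in V(S) \cap X$ and set $L = V(S) \setminus \{x\}$, a $K_\ell$.

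Two preparatory observations drive the rest of the argument. Let $V_\ell = \{v \in V(G) : v \text{ lies in some } K_\ell\}$. First, $N(L') \subseteq V_\ell$ for every $K_\ell$ $L'$, because for any $w \in V(L')$ and $y \in N(L')$ the set $(V(L') \setminus \{w\}) \cup \{y\}$ is itself a $K_\ell$ that contains $y$. Second, $X \cap V_\ell$ is a clique of $G$: if $x_1, x_2 \in X \cap V_\ell$ are distinct, then $x_1 \in V_\ell \subseteq N[x_2]$ combined with $x_1 \neq x_2$ gives $x_1 x_2 \in E(G)$.

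Now I split on whether $N(L)$ meets the complement of $X$. If there exists $y \in N(L) \setminus X$, then $S' := V(L) \cup \{y\}$ is a $K_{\ell+1}$ (since $y$ is adjacent to every vertex of $V(L)$), and
\[
V(S') \cap X = (V(L) \cap X) \cup (\{y\} \cap X) = (V(S) \cap X) \setminus \{x\},
\]
which is strictly smaller than $V(S) \cap X$, contradicting the minimality of $S$. Otherwise $N(L) \subseteq X$, and the two observations together give $N(L) \subseteq X \cap V_\ell$, which is a clique. Since every vertex of $N(L)$ is adjacent to every vertex of $V(L)$, and both $V(L)$ and $N(L)$ are themselves cliques, $V(L) \cup N(L)$ is a clique of size at least $\ell + |N(L)| \geq \ell + (k - \ell) = k$ by Lemma~\ref{deg}. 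Thus $G$ contains $K_k$, hence $K_{k-\ell+1}$, so Theorem~\ref{usefullemma} forces $G \cong K_k$, contradicting the hypothesis.

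The main obstacle is spotting the right quantity to minimize; once this extremal $S$ is in hand, the two observations slot neatly into the two cases and close the argument using only Lemma~\ref{deg} and Theorem~\ref{usefullemma}.
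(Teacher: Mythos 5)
Your proof is correct, but it follows a genuinely different route from the paper's. The paper builds the desired $K_{\ell+1}$ greedily: starting from a $K_\ell$ and repeatedly applying Claim~\ref{missingclique}, it uses Lemma~\ref{deg} plus $G\not\cong K_k$ to find a nonedge $xy$ inside $N(L)$ for a $K_\ell$ containing the partial clique; the endpoint $x$ then comes equipped with a witness $K_\ell$ (namely $y$ together with $\ell-1$ vertices of $L$), and the ``moreover'' part of the claim keeps the growing clique inside some $K_\ell$ so the argument can be iterated $\ell+1$ times. You instead reformulate the conclusion as ``some $K_{\ell+1}$ avoids the set $X$ of bad vertices,'' take a $K_{\ell+1}$ minimizing $|V(S)\cap X|$, and run an exchange: a bad vertex can be swapped for any non-bad common neighbor of the remaining $K_\ell$, while in the blocking case $N(L)\subseteq X$ your two observations (common neighbors of a $K_\ell$ lie on $K_\ell$'s, and bad vertices lying on $K_\ell$'s are pairwise adjacent) make $V(L)\cup N(L)$ a clique of size at least $k$, contradicting $G\not\cong K_k$ via Theorem~\ref{usefullemma} (criticality alone would also suffice here, as would the shortcut that any nonedge $uv$ in $N(L)$ directly exhibits a non-bad vertex of $N(L)$, which is exactly the paper's key observation). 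Both arguments rest on the same two ingredients, Lemma~\ref{deg} and the absence of $K_k$, but your extremal/exchange formulation dispenses with the paper's inductive bookkeeping, since the $\ell$ retained vertices of $S$ already form the $K_\ell$ whose common neighborhood you exploit; the paper's greedy version, on the other hand, produces the witnesses explicitly rather than through the complementary notion of bad vertices. The only nitpick is the parenthetical justification that $k>\ell$: as written (``$G$ would be a $K_\ell$-critical graph on $\ell$ vertices'') it skips the one-line reason, namely that if $k=\ell$ then $G$ contains $K_k$ and criticality forces $G\cong K_k$; this is immediate and does not affect correctness.
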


\begin{proof}
Let $L'$ be a $K_{\ell}$. We will construct $S$ by induction via the following claim. 
\begin{claim}
For any subgraph $S$ contained in a copy $L$ of $K_{\ell}$, there exists $x$ such that $x \in N(S)$ and there is a copy $T$ of $K_{\ell}$, with $T \not \subseteq N[x]$. Moreover, if $|S| \leq \ell - 1$, we can pick $x$ such that $S \cup \{x \}$ is contained in a $K_{\ell}$.
\end{claim}

Note by Lemma~\ref{deg}, that $|N(L)| \geq k - \ell$. Since $G \not \cong K_k$, we have the existence of a pair $x, y \in N(L) \subseteq N(S)$ such that $x \not \sim y$, as otherwise $G[L \cup N(L)] \cong K_k$. Now, $y$ along with $\ell - 1$ vertices of $L$ forms a $K_{\ell}$ not in $N[x]$ as $xy$ is not an edge. If $|S| < \ell$, then we have that $\{x\} \cup S$ with some vertices from $L - S$ forms a $K_{\ell}$. Thus, $x$ is the desired vertex to fulfill the claim.

For our base case, note that $\emptyset \subseteq L'$ satisfies the conditions of the claim. Suppose we have an $S$ satisfying the conditions of claim with $|S| \leq \ell$. Then, by repeatedly applying the above claim, we have that there is an $x$ such that there is a $T$ a copy of $K_{\ell}$, with $T \not \subseteq N[x]$, and $S \cup \{ x\}$ satisfies the claim if $|S| \leq \ell - 1$ and prove Lemma~\ref{avoid} if $|S| = \ell$. 
\end{proof}

\begin{lemma}\label{missneigh}
Let $G$ be a $K_{\ell}$-critical graph with $\chi(G) = k$ and $x$ a vertex in $G$ such that there is a copy $L$ of $K_{\ell}$ with $L \not \subseteq N[x]$. Then, $N(L) \not \subseteq N(x)$. In particular, as $x \not \in N(L)$, this implies  $N(L) \not \subseteq N[x]$.
\end{lemma}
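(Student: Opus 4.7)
The plan is to argue by contradiction, extracting the contradiction directly from Lemma~\ref{deg}. Suppose toward contradiction that $N(L) \subseteq N(x)$. Before invoking any coloring, I would first reinterpret the hypothesis $L \not\subseteq N(x)$ in terms of $N(L)$: by definition, $N(L)$ is the set of common neighbors of $V(L)$, so $x \in N(L)$ if and only if $x$ is adjacent to every vertex of $L$, i.e.\ $L \subseteq N(x)$. Thus the hypothesis $L \not\subseteq N(x)$ is equivalent to saying $x \notin N(L)$.

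Next I would apply Lemma~\ref{deg}. Since $G$ is $K_\ell$-critical, $\chi(G - L) = k - \ell$, so fix a proper $(k - \ell)$-coloring $\phi$ of $G - L$. Lemma~\ref{deg} guarantees that for every color $i \in [k-\ell]$ the set $\phi^{-1}(i) \cap N(L)$ is nonempty. Since $x \notin V(L)$, the value $\phi(x)$ is defined; set $c = \phi(x)$.

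I would then choose any $y \in \phi^{-1}(c) \cap N(L)$. The reformulated hypothesis gives $x \notin N(L)$, while $y \in N(L)$, so $y \neq x$. The contradiction assumption $N(L) \subseteq N(x)$ then yields $y \in N(x)$, so $xy \in E(G)$. But $\phi(x) = c = \phi(y)$ on an edge of $G - L$, contradicting that $\phi$ is a proper coloring.

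I do not expect any real obstacle here; the content of the lemma is essentially a one-line application of Lemma~\ref{deg} once one notices that the hypothesis $L \not\subseteq N(x)$ is there exactly to guarantee $x \notin N(L)$, which forces the witness $y$ of color $c$ in $N(L)$ to be distinct from $x$.
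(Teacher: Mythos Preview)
Your proof is correct and is essentially the same argument as the paper's: assume $N(L)\subseteq N(x)$, take a $(k-\ell)$-coloring $\phi$ of $G-L$, use Lemma~\ref{deg} to find $y\in N(L)$ with $\phi(y)=\phi(x)$, and obtain a monochromatic edge $xy$. You are merely more explicit than the paper in spelling out why $x\notin N(L)$ (hence $y\neq x$), which is a welcome clarification.
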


\begin{proof}
  Suppose otherwise and remove $L$ from $G$. Observe that if $x \in V(L)$, then $L \subseteq N[x]$, so $x \in V(G - L)$. Furthermore, $G - L$ is $(k - \ell)$-colorable. Fix a $(k - \ell)$-coloring $\phi$, and note that by Lemma~\ref{deg}, there is a vertex $y \in N(L)$ such that $\phi(y) = \phi(x)$. As $N(L) \subseteq N(x)$, we have a monochromatic edge, contradicting $\phi$ being a coloring. So $N(L) \not \subseteq N(x)$. 
\end{proof}

\begin{lemma}\label{chrom}
Let $G$ be a $K_{\ell}$-critical graph and $x$ a vertex in $G$ such that there is a copy $L_0$ of $ K_{\ell}$ with $L_0 \not \subseteq N[x]$. Then, $\chi(G[N(x)]) \leq k - \ell - 1$. 
\end{lemma}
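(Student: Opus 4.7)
My plan is to reduce to the case where $L_0$ is disjoint from $N(x)$, from which the conclusion is nearly immediate. If we can find a $K_\ell$, call it $L$, with $V(L)\cap N(x)=\emptyset$ and $x\notin V(L)$, then $\chi(G-L)=k-\ell$ by $K_\ell$-criticality. Fix a $(k-\ell)$-coloring $\phi$ of $G-L$. Since $x\notin V(L)$, $\phi(x)$ is defined; and since $V(L)\cap N(x)=\emptyset$, we have $N(x)\subseteq V(G-L)$, so $\phi$ properly colors $G[N(x)]$. Every $u\in N(x)$ is adjacent to $x$, so $\phi(u)\neq\phi(x)$, and $\phi$ restricted to $N(x)$ uses colors from $[k-\ell]\setminus\{\phi(x)\}$, a set of size $k-\ell-1$. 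Hence $\chi(G[N(x)])\leq k-\ell-1$.

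To produce such an $L$, I would iteratively modify $L_0$. Set $L_0$ as given. At stage $i$, if $V(L_i)\cap N(x)=\emptyset$, take $L=L_i$ and stop. Otherwise, pick any $u_i\in V(L_i)\cap N(x)$, and apply Lemma~\ref{missneigh} (whose hypotheses $L_i\not\subseteq N(x)$ and $x\notin V(L_i)$ will be maintained inductively) to obtain some $w_i\in N(L_i)\setminus N(x)$. Set $L_{i+1}$ to be the subgraph induced on $(V(L_i)\setminus\{u_i\})\cup\{w_i\}$. One checks that $L_{i+1}$ is a $K_\ell$: $w_i$ is adjacent to every vertex of $V(L_i)\setminus\{u_i\}$, and $w_i\notin V(L_i)$ (else $w_i\sim w_i$). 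Moreover, $x\notin V(L_{i+1})$, since $x\neq u_i$ (as $x\notin V(L_i)$) and $x\neq w_i$ (as $L_i\not\subseteq N(x)$ implies $x\notin N(L_i)$); and $L_{i+1}\not\subseteq N(x)$ because $w_i\notin N(x)$. Since $|V(L_{i+1})\cap N(x)|=|V(L_i)\cap N(x)|-1$ and $|V(L_0)\cap N(x)|\leq\ell-1$, the process halts in at most $\ell-1$ steps.

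There is no real obstacle here once the swapping idea is in hand; the argument is essentially bookkeeping, and the main subtlety is simply verifying that the hypotheses of Lemma~\ref{missneigh} are preserved after each swap, which is where $w_i\notin N(x)$ (giving $L_{i+1}\not\subseteq N(x)$) and $x\notin N(L_i)$ (giving $x\neq w_i$, hence $x\notin V(L_{i+1})$) are both used.
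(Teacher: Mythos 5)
Your proposal is correct and follows essentially the same route as the paper: repeatedly swap a vertex of $L_i\cap N(x)$ for a vertex of $N(L_i)\setminus N(x)$ supplied by Lemma~\ref{missneigh} until the clique is disjoint from $N[x]$, then delete it and use criticality to $(k-\ell)$-color $N[x]$, so $N(x)$ avoids the color of $x$. Your write-up just makes explicit the bookkeeping (preservation of the hypotheses of Lemma~\ref{missneigh} and termination) that the paper leaves implicit.
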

\begin{proof}

We will need the following claim. 

\begin{claim}
Let $L_i$ be a $K_{\ell}$ intersecting $N(x)$ in $1 \leq s < \ell$ vertices with $x \not \in V(L_i)$. Then there exists a copy $L_{i + 1}$ of $K_\ell$ that intersects $N(x)$ in $s - 1$ places with $x \not \in V(L_{i + 1})$.
\end{claim}

  By Lemma~\ref{missneigh}, there exists a $z \in N(L_{i}) - N(x)$. Since $V(L_{i}) \cap N(x) \neq \emptyset$, there is some vertex $w \in V(L_{i}) \cap N(x) $. Let $L_{i +1} = G[ V(L_i) \cup \{ z\} - \{w\}]$. 

With this claim, we see there is some copy $L_j $ of $ K_{\ell}$ not containing $x$ that intersects $N(x)$ in zero places. Remove $L_j$ from the graph. We have that the remainder is $(k - \ell)$-colorable, so $N(x) \cup \{x\}$ is $(k - \ell)$-colorable. Thus, $N(x)$ is $(k - \ell - 1)$-colorable, as $x$ is adjacent to every vertex within. 
\end{proof}

\begin{lemma}\label{vertexdegree}
Let $G$ be a $K_{\ell}$-critical graph with chromatic number $k$ which is not $K_k$, with $\ell \geq 2$. Then, every vertex which lies on a $K_{\ell}$ has degree at least $k + 2\ell - 3$. In particular, for all $1\leq i \leq \ell$, if $H$ is a copy of $K_i$ is contained in some $K_{\ell}$ in $G$, then $d(H) \geq k - \ell + 3(\ell - i) $. 
\end{lemma}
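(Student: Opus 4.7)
The plan is to prove the more general (``in particular'') statement by downward induction on $i$, ranging from $i = \ell$ down to $i = 1$; the vertex-degree claim then follows by specializing to $i = 1$, since $k - \ell + 3(\ell - 1) = k + 2\ell - 3$. The base case $i = \ell$ is immediate: for $H = L \cong K_\ell$, the bound $d(H) \geq k - \ell$ is exactly Lemma~\ref{deg}.

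For the inductive step, assume the bound for $K_{i+1}$'s, and let $H \cong K_i \subseteq L \cong K_\ell$ with $i < \ell$. Pick any $v \in L - H$ and set $H' = H \cup \{v\} \cong K_{i+1}$. The induction hypothesis yields $d(H') \geq k - \ell + 3(\ell - i - 1)$, so it suffices to show $d(H) \geq d(H') + 3$, i.e., $|N(H) \setminus N(H')| \geq 3$. A direct computation gives $N(H) \setminus N(H') = \{v\} \sqcup X$, where
\[
X = \{w \in V(G) \setminus L : w \sim h \text{ for all } h \in H, \ w \not\sim v\}.
\]
Indeed, $v$ belongs to $N(H)$ via the clique $L$ but not to $N(H')$ by our convention that $N(S) \cap S = \emptyset$; any other vertex of $N(H) \setminus N(H')$ must be adjacent to all of $H$ but not to $v$, which rules out vertices of $L - H - \{v\}$ (these are adjacent to $v$ inside the clique $L$). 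Hence $d(H) - d(H') = 1 + |X|$, and the step reduces to showing $|X| \geq 2$.

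To prove $|X| \geq 2$, I would argue by contradiction: suppose $|X| \leq 1$, so at most one ``rogue'' vertex $x_0 \in V(G) \setminus L$ is adjacent to all of $H$ without being adjacent to $v$. Fix a $(k - \ell)$-coloring $\phi$ of $G - L$ and extend it to a $k$-coloring $\psi$ of $G$ by giving each vertex of $L$ a distinct fresh color, with $\psi(v) = k$. By Lemma~\ref{deg}, every color $c \in [k - \ell]$ appears in $N(L) \subseteq N(v) \setminus L$. Now invoke Lemma~\ref{kempechain} with endpoints among the vertices of $L$ and appropriate color sequences in $[k - \ell]$; the constraint $|X| \leq 1$ forces almost every color-$c$ vertex of $N(H) \setminus L$ to lie in $N(v)$, and this tightly restricts the bichromatic $\{c, k\}$-components of $\psi$ around $v$. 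Using the paths supplied by Lemma~\ref{kempechain}, one can then choose $c \in [k - \ell]$ and a bichromatic component whose Kempe swap vacates color $k$ from $N(v)$ without introducing a new color-$c$ conflict at $v$. Recoloring $v$ with $c$ produces a $(k - 1)$-coloring of $G$, contradicting $\chi(G) = k$. The main obstacle is the Kempe swap justification itself: one must control the single rogue vertex $x_0$ (if present) and verify that the $\{c, k\}$-bichromatic component containing $v$ can indeed be chosen so that the swap simultaneously moves every color-$c$ neighbor of $v$ and introduces no new one, which is exactly the point where the structural hypothesis $|X| \leq 1$ and the generalized Kempe-chain paths of Lemma~\ref{kempechain} must be combined with care.
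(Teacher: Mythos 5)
Your induction skeleton (downward on $i$, base case $i=\ell$ from Lemma~\ref{deg}, and the specialization $k-\ell+3(\ell-1)=k+2\ell-3$) is fine, and the set identity $N(H)\setminus N(H') = \{v\}\sqcup X$ is correct. But the entire content of the lemma is concentrated in the step you leave unresolved, namely $|X|\ge 2$, and as stated it is a \emph{stronger}, relative claim than the lemma itself: you assert that for \emph{every} $K_\ell$ $L$ containing $H$ and \emph{every} $v\in L-H$ one has $d(H)\ge d(H\cup\{v\})+3$. Nothing in the cited lemmas gives this in that generality, and your sketch does not establish it. Concretely: with $\psi$ extending $\phi$ and color $k$ used only on $v$, the $\{c,k\}$-bichromatic component containing $v$ is just $\{v\}\cup(N(v)\cap\psi^{-1}(c))$, and swapping it recolors $v$ with $c$ but moves those neighbors to color $k$, so no color is eliminated and no contradiction with $\chi(G)=k$ results; Lemma~\ref{kempechain} only supplies paths between two vertices of a deleted $K_\ell$ with prescribed interior colors, and it is not explained how the hypothesis $|X|\le 1$ interacts with those paths to force the recoloring you want. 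Since you yourself flag this as ``the main obstacle,'' the proposal is a plan with the decisive step missing, not a proof.

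For comparison, the paper gets the ``$+3$ per deleted vertex'' gain differently, and only for a carefully chosen clique: fix $x_1$ (or the $K_i$ $H$) and choose $L\cong K_\ell$ containing it with $|N(L)|$ \emph{minimum} over all such copies; since $G\not\cong K_k$ there is a nonedge $u,w$ in $N(L)$. For each $x_j\in L-H$, replacing $x_j$ by $u$ gives $L_j$ with $|N(L_j)|\ge|N(L)|$ by minimality but $w\notin N(L_j)$, which forces a vertex $z_j\in N(L_j)$ outside $N(L)\cup\{x_j\}$ (so $z_j\not\sim x_j$); then Lemma~\ref{missneigh} applied to $x_j$ and the clique $L-x_j+z_j$ yields a second vertex $z_j'\notin N[x_j]$. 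The vertices $x_j,z_j,z_j'$ (all adjacent to every vertex of $H$, all outside $N(L)$, and pairwise distinct across $j$) are what produce the count $d(H)\ge (k-\ell)+3(\ell-i)$. The minimality of $|N(L)|$ and the nonedge in $N(L)$ are exactly the devices that replace your unproven claim; if you want to keep your inductive formulation, you would at minimum need to restrict to such an extremal $L$ and prove the $|X|\ge 2$ step by this kind of replacement argument rather than by the Kempe-chain route you outline.
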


\begin{proof}
Let $x_1$ be a vertex in $G$ that lies on a $K_{\ell}$. 
Take the $L$ copy of $K_{\ell}$ containing $x_1$ such that over all copies $S$ of $K_{\ell}$ containing $x_1$, $|N(L)| \leq |N(S)|$. Let $V(L)= \{ x_1, \dots, x_{\ell}\}$. By Lemma~\ref{deg}, the number of common neighbors of $V(L)$ is at least $k - \ell$. Since $G$ is not a $K_k$, there is at least one nonedge between two vertices $u, v$ in $N(L)$.

Let $L_i$ denote the $K_{\ell}$ formed by taking $G[\{x_1, \dots, x_{i - 1}, u, x_{i + 1}, \dots, x_{\ell}\}]$, for $i \in [2, \ell]$. Each such $L_i$ has at least as many neighbors as $L$, but does not have $v$ as a neighbor. Thus, $N(L_i ) - ( \{x_i\} \cup N(L)) \neq \emptyset$. Let $z_i$ be in $N(L_i )-  ( \{x_i\} \cup N(L)) $. Since $z_i \not \in N(L)$, but $z_i \in N(\{x_1, \dots x_{i -1}, x_{i + 1}, \dots x_{\ell}\})$, we know that $z_i \not \in N(x_i)$. Thus by Lemma~\ref{missneigh},  we have that $L_{i}' = G[\{x_1, \dots, x_{i - 1}, z_i, x_{i + 1}, \dots, x_{\ell}\}]$ satisfies $N(L_i') \not \subseteq N[x_i]$. Let $z_i'$ be a vertex in $N(L_i') - N[x_i]$. Note in particular, $z_i' \not \in N(L)$. 

Note that $z_i \neq z_j$ for $i \neq j$, as then $z_i$ would be in $N(L)$. Furthermore, we have that $z_i' \neq z_j$, as $z_i'$ is not adjacent to $x_i$, yet $z_j$ is adjacent to $x_i$. Similarly, $z_i' \neq z_j'$ for $i \neq j$.

Thus, $d(x_1) \geq |V(L) - \{x_1\}| + d(L) + |\{ z_2, z_2', \dots z_{\ell}, z_{\ell}'\}| \geq \ell - 1 + k - \ell + 2(\ell - 1) \geq k + 2\ell - 3$. Via the previous argument, any $K_i$ contained in a $K_{\ell}$ has at least $k - \ell + 3(\ell - i)$ many common neighbors. 
\end{proof}

%\begin{lemma}\label{completingedges}
%Let $G$ be a claw-free $K_{\ell}$-critical graph of chromatic number $k$ which is not complete. Let $S \subseteq G$ such that $S \cong K_{i}$ is contained in a $K_{\ell}$. Then for any $x \in N(S)$, there is a $S' \cong K_{i + 2}$ such that $S \cup \{ x\} \subseteq S'$. 
%\end{lemma}

%\begin{proof}
 %By Lemma~\ref{vertexdegree}, we have that $d(S) \geq k - \ell + 3(\ell - i)$. Suppose on the contrary there is some $x \in N(S)$ such that $x$ is isolated inside $N(S)$. Were there any other nonedge not involving $x$ inside $N(S)$, $G$ would contain a claw, so there are no nonedges in $N(S) - \{ x\}$. Thus, $S \cup N(S) - \{ x\}$ is a clique of size at least $k - \ell + 3 (\ell - i) - 1 + i \geq k + 2\ell - (2i + 1) \geq k - 1$. So by Lemma~\ref{usefullemma}, $G \cong K_k$, a contradiction. Thus, there are no isolated vertices in $N(S)$, and so for any $x \in N(S)$, there is a $y$ such that $G[V(S) \cup \{ x, y\}] \cong K_{ i + 2}$. 
%\end{proof}

%\begin{observation}
 % If $G$ is a claw-free triangle-critical graph, then every edge of $G$ lies on a triangle by Lemma~\ref{completingedges}.
%\end{observation}

\begin{lemma}\label{outside}
    Let $G$ be a $K_\ell$-critical graph. Let $x$ be any vertex of $G$ and $v$ be a vertex lying on a copy $L_0$ of $K_{\ell}$ which contains a vertex outside $N[x]$. Then $v$ has at least $\ell$ neighbors outside of $N[x]$. 
\end{lemma}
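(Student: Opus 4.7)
The plan is to construct, by induction on $i=1,\dots,\ell$, a sequence $u_1,\dots,u_\ell$ of pairwise distinct vertices in $N(v)\cap Z$, where $Z:=V(G)\setminus N[x]$. Alongside I maintain a copy $L_{i-1}\cong K_\ell$ that contains $v$, meets $Z$, avoids $x$, and contains every previously produced $u_j$. The base case $i=1$ uses $L_0$ itself: $x\notin V(L_0)$, for otherwise $x$ would be adjacent (inside the clique $L_0$) to the promised vertex of $L_0$ in $Z$, a contradiction.

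At step $i$ the pair $(x,L_{i-1})$ satisfies the hypotheses of Lemma~\ref{missneigh}: $x\notin V(L_{i-1})$, and $L_{i-1}\not\subseteq N(x)$ because $L_{i-1}$ meets $Z$. Hence $N(L_{i-1})\not\subseteq N[x]$, and I pick $u_i\in N(L_{i-1})\cap Z$. Since $u_i$ is a common neighbor of $L_{i-1}$, it is adjacent to $v$ and to each $u_j$ with $j<i$, so $u_i\in N(v)\cap Z$ and $u_i\notin\{u_1,\dots,u_{i-1}\}$.

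To continue, whenever $i\le\ell-1$ I update to $L_i:=(L_{i-1}\setminus\{y_{i-1}\})\cup\{u_i\}$, where $y_{i-1}$ is chosen from $V(L_{i-1})\setminus(\{v\}\cup\{u_1,\dots,u_{i-1}\})$; this set has cardinality $\ell-i\ge 1$, so $y_{i-1}$ exists. Because $u_i$ is a common neighbor of $L_{i-1}\setminus\{y_{i-1}\}$, $L_i$ is again a copy of $K_\ell$; it contains $v,u_1,\dots,u_i$, meets $Z$ via $u_i$, and satisfies $x\notin V(L_i)$ since $x$ is not adjacent to $u_i\in Z$. Thus the inductive hypotheses persist. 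After step $\ell-1$ one has $L_{\ell-1}=\{v,u_1,\dots,u_{\ell-1}\}$, and a final application of Lemma~\ref{missneigh} to $L_{\ell-1}$ produces $u_\ell\in N(L_{\ell-1})\cap Z$ automatically distinct from $u_1,\dots,u_{\ell-1}$.

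The only delicate point is the bookkeeping that balances two competing demands: the previously produced $u_j$'s must be preserved inside the current clique (so that the next $u_i$, being a common neighbor, is forced to differ from them), while a removable vertex must still be available at every step. The arithmetic $\ell-i\ge 1$ for $i\le\ell-1$ is exactly what reconciles these, and Lemma~\ref{missneigh} then supplies the new vertex outside $N[x]$ at each stage.
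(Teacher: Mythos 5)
Your proof is correct and takes essentially the same route as the paper's: both iterate Lemma~\ref{missneigh}, repeatedly swapping a common neighbor lying outside $N[x]$ into a copy of $K_\ell$ through $v$ that avoids $x$, with $\ell-1$ exchange steps plus one final application of the lemma. The only difference is bookkeeping — you retain all newly found vertices $u_1,\dots,u_\ell$ inside the evolving clique to certify distinctness, whereas the paper drives the intersection with $N(x)$ to empty and then counts $V(L_j)-\{v\}$ together with one last common neighbor — and your accounting (in particular the cardinality check $\ell-i\ge 1$) is sound.
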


\begin{proof}

We will use the following claim. 

\begin{claim}
Let $L_i$ be a $K_{\ell}$ containing a vertex $v$ but not $x$ such that $|( L_i - \{v\} )\cap N(x) | = s$ with $1 \leq s < \ell - 1$. Then there exists $L_{i + 1}$ such that $| (L_{i + 1} - \{v\}) \cap N(x) | = s - 1$, $x \not \in V(L_{i + 1})$,  $L_{i + 1} \cong K_{\ell}$, and $v$ still lies on $L_{i + 1}$.
\end{claim}

  By Lemma~\ref{missneigh}, there exists a $z \in N(L_{i}) - N[x]$. Since $V(L_{i}) \cap N(x) -\{v \} \neq \emptyset$, there is some vertex $w \in V(L_{i}) \cap N(x) -\{v\} $. Let $L_{i +1} = G[ V(L_i) \cup \{ z\} - \{w\}]$.

By this claim, there is some copy $L_j$ of $K_{\ell}$ such that $(L_j - \{v \}) \cap N[x] = \emptyset$. By Lemma~\ref{missneigh}, there is a $z \in N(L_j) - N[x]$. Thus, there are at least $\ell$ vertices in $N(v) - N[x]$, namely $V(L_j) - \{v\}$ and $z$.

\end{proof}

\section{Proof of Theorem~\ref{main1}}\label{gen}

Note that Theorem~\ref{main1} follows from below and Corollary~\ref{cor}. 
\begin{theorem}
Let $G$ be a $K_{\ell}$-critical graph with chromatic number $2 \ell + 1$ with $\ell \geq 2$. Then, $G \cong K_{2\ell + 1}$. 
\end{theorem}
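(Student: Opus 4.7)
The plan is to argue by contradiction and reduce to producing a $K_{\ell+2}$ in $G$. Indeed, Theorem~\ref{usefullemma} says that any $K_\ell$-critical graph of chromatic number $k$ containing a $K_{k-\ell+1}$ is $K_k$, so with $k = 2\ell+1$ it is enough to show $G$ contains $K_{\ell+2}$. Suppose toward contradiction that $G \not\cong K_{2\ell+1}$; then $G$ has no $K_{\ell+2}$.

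Fix any $K_\ell$ subgraph $L = \{x_1,\ldots,x_\ell\}$. By Lemma~\ref{deg}, $|N(L)| \geq \ell+1$, and since any edge inside $N(L)$ would, together with $L$, produce a $K_{\ell+2}$, the set $N(L)$ must be independent. Choose $u \in N(L)$ and introduce the $\ell$ further cliques $L_i := (L \cup \{u\}) \setminus \{x_i\} \cong K_\ell$ for $i \in [\ell]$. The structural claim is threefold:
\begin{enumerate}[label=(\roman*)]
\item Each $N(L_i)$ is independent of size $\geq \ell+1$, contains $x_i$, and every other element lies outside $L \cup N(L)$ (an element of $N(L) \setminus \{u\}$ would be adjacent to $u$, contradicting independence of $N(L)$).
\item The neighborhoods $N(L), N(L_1), \ldots, N(L_\ell)$ are pairwise disjoint: a vertex in $N(L_i) \cap N(L_j)$ for $i \neq j$, together with $L \cup \{u\}$, would form a $K_{\ell+2}$, and a vertex in $N(L) \cap N(L_i)$ would have to be adjacent to $u$ while also sitting in the independent set $N(L)$.
\item $N(L \cup \{u\}) = N(L) \cap N(u) = \emptyset$, since any common neighbor would be a vertex of $N(L)$ adjacent to $u \in N(L)$.
\end{enumerate}

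Next I invoke Lemma~\ref{chrom} to control $\chi(G[N(v)])$ for each $v \in L \cup \{u\}$. For $v = u$, the witness is the $K_\ell$ $\{v', x_2, \ldots, x_\ell\}$ with any $v' \in N(L) \setminus \{u\}$ (not in $N(u)$ by independence of $N(L)$); for $v = x_i$, the witness is any $K_\ell$ drawn from $N(L_i) \setminus \{x_i\}$ together with $\ell-1$ elements of $L \setminus \{x_i\}$. Hence $\chi(G[N(v)]) \leq \ell$ for every $v \in L \cup \{u\}$. In any $\ell$-coloring of $G[N(u)]$, the clique $L$ uses all $\ell$ colors, and since (iii) says no vertex of $N(u) \setminus L$ is adjacent to all of $L$, every vertex of $N(L_i) \setminus \{x_i\} \subseteq N(u)$ — being adjacent to $L \setminus \{x_i\}$ but not to $x_i$ — is forced to share the color of $x_i$; in particular the sets $N(L_i) \setminus \{x_i\}$ are monochromatic inside $N(u)$.

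Finally, fix an $(\ell+1)$-coloring $\phi$ of $G - L$ with $\phi(u) = 1$ (possible by Lemma~\ref{deg}, which guarantees every color appears in $N(L)$). Apply Lemma~\ref{kempechain} to pairs $x_i \neq x_j \in V(L)$ and length-two color sequences $(j_1,j_2)$ to obtain paths $x_i - v_1 - v_2 - x_j$ with $\phi(v_l) = j_l$. Since $N(L)$ is independent, $v_1$ and $v_2$ cannot both lie in $N(L)$, so the path is forced into the outer sets $N(L_m) \setminus \{x_m\}$; using the disjointness in (ii) and the monochromaticity forced in the previous paragraph, a suitable choice of $(j_1,j_2)$ produces either two adjacent vertices in some $N(L_m)$ — contradicting its independence — or a common neighbor of $L \cup \{u\}$ — contradicting (iii). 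Either outcome yields the required $K_{\ell+2}$. The main obstacle is this last step: one must track carefully how the Kempe chain interacts with the forced color classes of the $N(L_m) \setminus \{x_m\}$ so that the needed edge (or common neighbor) is genuinely produced rather than lost into the residual set $W \subseteq N(u)$ of vertices missing two or more elements of $L$.
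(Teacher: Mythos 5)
Your reduction is sound as far as it goes: by Theorem~\ref{usefullemma} with $k=2\ell+1$ it suffices to rule out a $K_{\ell+2}$, and under that assumption your structural facts (i)--(iii) all check out --- $N(L)$ and each $N(L_i)$ are independent sets of size at least $\ell+1$ (Lemma~\ref{deg} plus $K_{\ell+2}$-freeness), they are disjoint apart from the vertices $x_i$, $N(L\cup\{u\})=\emptyset$, and the witnesses you name do let Lemma~\ref{chrom} bound $\chi(G[N(v)])\le\ell$ for $v\in L\cup\{u\}$. The proof genuinely breaks at the step you yourself flag, and the problem is not merely bookkeeping. Lemma~\ref{kempechain} with $t=2$ only guarantees a path $x_i\,\hbox{--}\,v_1\,\hbox{--}\,v_2\,\hbox{--}\,x_j$ whose internal vertices lie in $G-L$ and carry the prescribed colors; $v_1$ is just \emph{some} neighbor of $x_i$, not a common neighbor of $L$ or of any $L_m$, and it need not even lie in $N(u)$. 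Independence of $N(L)$ excludes only the case that both $v_1$ and $v_2$ lie in $N(L)$; it does not force the path into the sets $N(L_m)\setminus\{x_m\}$, so the claimed dichotomy (an edge inside some $N(L_m)$, or a common neighbor of $L\cup\{u\}$) simply does not follow for any choice of $(j_1,j_2)$. Compounding this, the monochromaticity you establish for $N(L_i)\setminus\{x_i\}$ is a statement about $\ell$-colorings of the induced subgraph $G[N(u)]$, while the Kempe chain is taken with respect to an $(\ell+1)$-coloring $\phi$ of $G-L$; these are colorings of different graphs with different numbers of colors, and no link between them is provided. So the contradiction is never actually reached.

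For contrast, the paper's proof gets the needed control in a different way: starting from a fixed $(\ell+1)$-coloring of $G-X$ it grows a long sequence of vertices forming consecutive $K_{\ell+1}$'s overlapping in $K_\ell$'s; this sequence is uniquely $(\ell+1)$-colorable, and applying Lemma~\ref{deg} to the \emph{last} $K_\ell$ of the sequence --- whose common neighborhood meets $X$ in at most one vertex by $K_{\ell+2}$-freeness, and whose remaining common neighbors are forced into a single color class by unique colorability --- shows that in a $(\ell+1)$-coloring of $G$ minus that clique some color class is missed, contradicting Lemma~\ref{deg}. To complete your route you would need a mechanism of comparable strength that pins down where the relevant common neighbors lie; the two-step Kempe chain by itself does not provide it.
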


\begin{proof}
Assume otherwise, and let $G$ be such a graph. Fix a copy $X$ of $K_{\ell}$, and let $\{ x_1, x_2 \dots x_\ell\} = V(X)$ inside $G$, and fix a $(\ell + 1)$-coloring of $G - X$, $\phi: V(G - X) \rightarrow [ \ell + 1]$. Let $a_1$ be one of the at least $\ell + 1$ common neighbors of $X$. For $i < \ell$, having defined $a_1, \dots a_i$, let $a_{i + 1}$ be a common neighbor of $\{a_1, \dots a_i, x_{i + 1} \dots x_{\ell}\}$ which is not among $x_1, \dots x_{i}$. As by Lemma~\ref{deg} the common neighborhood has size $\ell + 1$, we have that there is such a choice.

Now, for $i \geq \ell$, having defined $a_{i - \ell + 1}, a_{i - \ell + 2}, \dots a_i$, we define $a_{i + 1}$ as any common neighbor of these vertices among $V(G - X)$ yet to appear on our sequence. We stop when no choices remain.

Note that this sequence $\{a_1, a_2 \dots, a_p\}$ is uniquely $(\ell + 1)$-colorable by construction, i.e. if $\phi$ and $\phi'$ are two $(\ell + 1)$-colorings of $\{a_1, a_2, \dots a_p\}$, there is a permutation $\pi: [\ell + 1] \rightarrow [\ell + 1]$ such that $\pi \circ \phi = \phi'$. Since the sequence induces a subgraph of $V(G - X)$, it is $(\ell + 1)$-colorable, and as it can be seen as a sequence of $K_{\ell + 1}$'s intersecting in $K_{\ell}$'s, there is a unique up to relabeling way to do it: coloring $a_i$ with $i \pmod{\ell + 1}$, making it uniquely $(\ell + 1)$-colorable. Let $L=G[\{a_{p-\ell + 1}, a_{p - \ell + 2}, \dots, a_{p}\}]$ be the last $K_{\ell}$ on the sequence. Lemma~\ref{usefullemma} implies that $G$ is $K_{\ell + 2}$-free, hence { $d_X(L)\leq 1$}. Furthermore, by Lemma~\ref{deg}, $d_{G-X}(L) \geq \ell$. Given that $L$ is a $K_{\ell}$, in any $(\ell + 1)$-coloring of $G-X$, $N_{G-X}(L)$ is monochromatic. As $L$ is the last $K_{\ell}$, $N_{G - X}(L) \supseteq \{a_{b_1}, a_{b_2}, \dots a_{b_{\ell}}\}$ lies in the sequence. Thus, $b_1, b_2, \dots b_{\ell} \equiv p + 1 \pmod{\ell + 1}$. So, letting $b_1$ be smallest among $\{b_1, b_2, \dots b_\ell\}$, we have that $p + 1 - \ell (\ell + 1) \geq b_1 \geq 1$, therefore $p \geq \ell ( \ell + 1)$.

 Note that since $p \geq \ell (\ell + 1)$, $L$ is distinct from $a_1, a_2, \dots, a_{\ell}$. By our earlier observation, there is a $j$ such that $N_{G - X}(L) \subseteq \phi^{ -1}(j)$.
Let $A = \phi^{-1}(j) \cap \{a_1, a_2, \dots a_{p - \ell }\}$. Now, by unique colorability, any $(\ell + 1)$-coloring of $G - L$ colors $A$ with one color. As $L$ has at most one common neighbor among $X$, we have that the neighborhood of $L$ sees at most two color classes of the coloring of $G - L$, and thus misses at least one. But this contradicts Lemma~\ref{deg}, thus $G \cong K_{2 \ell + 1}$. 
\end{proof}

\section{Proof of Theorem~\ref{main2}}\label{clawfree}

We will now show the following. We include this proof to highlight the alternative method using Ramsey numbers and their lower bound constructions. 
\begin{theorem}
If $G$ is triangle-critical, has chromatic number eight, and is claw-free, then $G \cong K_{8}$.
\end{theorem}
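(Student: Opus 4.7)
The plan is to suppose $G \ncong K_8$ and derive a contradiction by squeezing the sizes of the common neighborhoods of vertices, edges, and triangles between the lower bounds from Lemma~\ref{vertexdegree} and the Ramsey upper bounds coming from $R(3,3) = 6$, $R(3,4) = 9$, and $R(3,5) = 14$, and then exploiting the uniqueness of $C_5$ as the triangle-free $\alpha \leq 2$ graph on five vertices.

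First I would use Lemma~\ref{usefullemma} (with $\ell = 3$, $k = 8$) to conclude that if $G \ncong K_8$ then $G$ is $K_6$-free. Claw-freeness gives $\alpha(G[N(v)]) \leq 2$ for every $v$, and Lemma~\ref{deg} gives $d(v) \geq 7$, so $G[N(v)]$ contains an edge and every vertex lies on a triangle. Applying Lemma~\ref{vertexdegree} then yields the lower bounds $d(v) \geq 11$ for every $v \in V(G)$, $|N(e)| \geq 8$ for every edge $e$ on a triangle, and $|N(L)| \geq 5$ for every triangle $L$.

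Next I would pin these sizes down exactly via Ramsey. Since $G[N(v)]$ has $\alpha \leq 2$ and no $K_5$ (otherwise $v$ would lie on a $K_6$), $R(3,5) = 14$ forces $d(v) \leq 13$. Since $G[N(e)]$ has $\alpha \leq 2$ and no $K_4$, $R(3,4) = 9$ gives $|N(e)| = 8$; since $G[N(L)]$ has $\alpha \leq 2$ and no triangle, $R(3,3) = 6$ gives $|N(L)| = 5$. The only graph on five vertices with independence number at most $2$ and no triangle is $C_5$, so $G[N(L)] \cong C_5$ for every triangle $L$.

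For the contradiction I would fix a triangle $L = \{u, v, w\}$, label $N(L) = \{a_1, \dots, a_5\}$ so that $a_i \sim a_{i+1}$ (indices mod $5$), and write the two extra vertices of $N(uv) \supseteq \{w, a_1, \dots, a_5\}$ as $c_1, c_2$. Neither $c_j$ is adjacent to $w$, else $c_j$ would lie in $N(L)$; and applying $|N(\{u,v,a_i\})| = 5$ together with the forced members $w, a_{i-1}, a_{i+1}$ shows that both $c_1$ and $c_2$ must be adjacent to every $a_i$. Inside $N(u)$ the triple $\{w, c_1, c_2\}$ with $w \not\sim c_j$ then forces $c_1 \sim c_2$ by $\alpha(G[N(u)]) \leq 2$. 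Now $\{a_1, a_2, c_1, c_2\}$ is a $K_4$ inside $N(uv)$, contradicting the $K_4$-freeness of $N(uv)$. The main obstacle is getting the equalities $|N(e)| = 8$ and $|N(L)| = 5$ and the $C_5$-structure simultaneously from the three Ramsey upper bounds; once these are in hand, the two extra neighbors $c_1, c_2$ are automatically pinned down and the forbidden $K_4$ materializes with almost no further work.
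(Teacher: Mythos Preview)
Your proof is correct and is a genuinely different---and shorter---route than the paper's.  Both arguments start the same way: assuming $G\ncong K_8$, Lemma~\ref{usefullemma} gives $K_6$-freeness, and then for any triangle $L$ the set $N(L)$ is triangle-free with $\alpha\le 2$, so $R(3,3)=6$ forces $|N(L)|=5$ and $G[N(L)]\cong C_5$.  From here the paper proceeds by an extended structural build-out: fixing $T_1=\{x,y,z\}$ with $N(T_1)=\{a,b,c,d,e\}$, it analyses the five triangles $\{x,y,a\},\dots,\{x,y,e\}$ to locate five new vertices $a_1,\dots,e_1$, then finds a further vertex $w$ with $N(\{x,y,w\})=\{a_1,\dots,e_1\}$, and finally removes the triangle $\{a,b,a_1\}$ and shows by a forced-colour propagation that the remainder cannot be $5$-coloured.

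Your argument instead brings in the next Ramsey number.  The lower bound $|N(uv)|\ge 8$ from Lemma~\ref{vertexdegree} meets the upper bound $|N(uv)|\le R(3,4)-1=8$ coming from $K_4$-freeness of $N(uv)$, so $N(uv)$ has exactly eight vertices: $w$, the five $a_i$, and two extras $c_1,c_2$.  Applying $|N(\{u,v,a_i\})|=5$ then forces $c_1,c_2\sim a_i$ for every $i$, claw-freeness at $u$ forces $c_1\sim c_2$, and $\{a_1,a_2,c_1,c_2\}$ is a $K_4$ in $N(uv)$---contradiction.  The payoff of your approach is that the extra Ramsey constraint $|N(e)|=8$ pins down the two surplus vertices immediately, eliminating the need for the paper's long chain of triangle analyses and the final colouring argument.  (Your mention of $R(3,5)=14$ and $d(v)\le 13$ is not actually used; you may drop it.)
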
 

\begin{proof}
Let $G$ be a triangle-critical claw-free graph with chromatic number eight. Then, $d(T) \geq 5$ for all triangles $T$ by Lemma~\ref{deg}.

Fix any triangle $T$. We will now show that $G[N(T)] \cong C_5$ for $T$. If $G$ is not $K_8$, then the neighborhood of $T$ is $K_3$-free by Lemma~\ref{usefullemma}. Furthermore, since $G$ is claw-free, the independence number of $G[N(T)]$ is at most two. Suppose on the contrary that $d(T) \geq 6$. Then since $R(3, 3) = 6$, $N(T)$ contains either a triangle or an independent set of size three, a contradiction. Thus, $d(T) = 5$. Consequently, from the uniqueness of the lower bound Ramsey construction, $G[N(T)] \cong C_5$. 

Let $T_1$ be a triangle in $G$, with $V(T_1) = \{x, y, z\}$. Let $N(T_1) = \{a, b, c, d, e\}$, which forms a cycle $(a , b , c , d , e )$. Now, take the triangle $T_a$ induced by $\{x, y, a\}$, the common neighborhood of this triangle certainly contains the vertices $z, b$, and $e$. As before, $G[N(T_a)] \cong C_5$, so there must be vertices, which we will call suggestively $a_1, e_1$ lying in the common neighborhood such that $(z , b , a_1 , e_1 , e )$ is a cycle. Note that  $a \not \sim c, d$, therefore $a_1, e_1 \neq c, d$. 

Now, let us examine the triangle $T_b$ induced by $\{x, y, b\}$. $N(T_b)$ includes $a, a_1, z, c$. We already know that $c \sim z \sim a \sim a_1$, so there must be a $b_1 \in N(T_b)$ such that $c \sim b_1 \sim a_1$. As  $b_1 \sim b$, we have that $b_1 \neq e_1$ since $e_1 \not \sim b$. Similarly, $b_1 \neq d, e$. 

Let us now look at the triangle $T_c$ induced by $\{x, y, c\}$. We note that $\{b, d, b_1, z\} \subseteq N(T_c)$. There must be a fifth vertex $c_1$ such that $c_1 \sim b_1, d$ but $c_1 \not \sim b, z$. Thus, $c_1 \neq a_1, b_1, a, e$, yet it may be true that $c_1 = e_1$. We will discount this possibility later. 

Consider now the triangle $T_d$ induced by $\{x, y, d\}$. We note that $\{c, e, c_1, z\} \subseteq N(T_d)$. There must be a fifth vertex $d_1$ such that $d_1 \sim c_1, e$ but $d_1 \not \sim z, c$. Note that for $G[N(T_d)] \cong C_5$, we must have that $c_1 \not \sim e$. Therefore, $c_1 \neq e_1$. Since $d_1 \not \sim c$, we have that $d_1 \neq b_1, b$. 

Let us look at the triangle $T_e$ induced by $\{x, y, e\}$. We note that $\{a, d, z, d_1, e_1 \} \subseteq N(T_e)$. If $e_1 = d_1$, then a subset of $N(T_e)$ would induce a $C_4$, so $e_1 \neq d_1$. Thus, in particular $d_1 \not \sim a$, so $d_1 \neq a_1$. Therefore all five $a_1, b_1, c_1, d_1, e_1$ are distinct and distinct from $\{a, b, c, d, e \}$.

Now, let us take the triangle $T_{a_1}$ induced by $\{x, y, a_1\}$. We note that $N(T_{a_1})$ contains the vertices $a, b, b_1, e_1$ and a vertex $w$ such that $(e_1, a , b , b_1 , w)$ is a $C_5$. Then, we look at the triangle $T_{b_1}$ induced by $\{x, y, b_1\}$, $N(T_{b_1})$ contains $b, c, a_1, c_1, w$. If $w = c_1$, $G[N(T_{b_1})]$ would contain a $C_4$, so we have that $w \neq c_1$ and $w \sim c_1$. Via similar arguments examining $N(\{x, y, c_1\})$, $N(\{x, y, d_1\})$, $N(\{x, y, e_1\})$, we have that $N(\{x, y, w\}) = \{a_1, b_1, c_1, d_1, e_1\}$. Note in particular that $w$ is distinct from all five of these vertices. By triangle-criticality, $G -\{a, b, a_1\}$ has chromatic number five.  Fix a coloring. Note that $x,y$ must have distinct colors from $\{w, z, c, d, e, b_1, c_1, d_1, e_1\}$, so we must color the rest with three colors. $\{b_1, c_1, c\}$ must all receive three distinct colors, say respectively 1, 2, 3. $\{b_1, c_1, w\}$ is a triangle, so $w$ must receive color $3$. $\{c, c_1, d\}$ is a triangle so $d$ receives color $1$. $\{d, c_1, d_1\}$ is a triangle, so $d_1$ receives color $3$. Yet $d_1 \sim w$, a contradiction.

Thus, $G \cong K_8$. 
\end{proof}

Furthermore, we will prove the following statement:

\begin{theorem}[Restatement of Theorem~\ref{main2}]
Let $\ell \geq 2$. Let $G$ be $K_{\ell}$-critical claw-free graph with chromatic number $k \leq 5 \ell - 4$. Then $G \cong K_k$.
\end{theorem}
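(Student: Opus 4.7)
The approach is by contradiction. Suppose $G$ is a $K_\ell$-critical claw-free graph with $\chi(G) = k \leq 5\ell - 4$ and $G \not\cong K_k$. Corollary~\ref{cor} lets us assume $k \geq 2\ell + 1$, and Lemma~\ref{usefullemma} forces $\omega(G) \leq k - \ell$. By Lemma~\ref{avoid}, fix $S \cong K_{\ell+1}$ such that each $x \in V(S)$ admits a $K_\ell$ $L_x$ with $x \notin V(L_x)$ and $L_x \not\subseteq N(x)$; Lemma~\ref{chrom} then yields $\chi(G[N(x)]) \leq k - \ell - 1$ for each such $x$. Since $G$ is claw-free, $\alpha(G[N(x)]) \leq 2$, so every color class of an optimal coloring of $G[N(x)]$ has size at most two, giving
\[
d(x) \,\leq\, 2\chi(G[N(x)]) \,\leq\, 2k - 2\ell - 2.
\]
Coupled with the lower bound $d(x) \geq k + 2\ell - 3$ of Lemma~\ref{vertexdegree}, this already forces $k \geq 4\ell - 1$, completing the range $k \leq 4\ell - 2$.

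For the remaining range $4\ell - 1 \leq k \leq 5\ell - 4$, I would sharpen the lower bound to $d(x) \geq k + 3\ell - 5$, which combined with the upper bound forces $k \geq 5\ell - 3$ and so contradicts the hypothesis. Fix $x_1 \in V(S)$ and a $K_\ell$ $L = \{x_1, \dots, x_\ell\} \subseteq S$. Mimicking the witness construction in the proof of Lemma~\ref{vertexdegree}, for each $i \in \{2, \dots, \ell\}$ one produces a $K_\ell$ $L_i'$ that contains $x_1$, avoids $x_i$, and has a vertex outside $N[x_i]$. Lemma~\ref{outside} then supplies $|C_i| \geq \ell$ where $C_i := N(x_1) \setminus N[x_i]$, and claw-freeness at $x_1$ forces each $C_i$ to be a clique: any non-edge $\{u, v\}$ in $C_i$ would make $\{u, v, x_i\}$ an independent triple in $N(x_1)$ and hence $\{x_1, u, v, x_i\}$ a claw. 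Because $V(L) \cup N(L) \subseteq N[x_i]$ for every $i \in \{2, \dots, \ell\}$, we obtain the decomposition $N(x_1) = (V(L) \setminus \{x_1\}) \sqcup N(L) \sqcup \bigcup_{i=2}^{\ell} C_i$, and combined with $|N(L)| \geq k - \ell$ the task reduces to establishing
\[
\bigg|\bigcup_{i=2}^{\ell} C_i\bigg| \,\geq\, 3\ell - 4.
\]

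The main obstacle is exactly this union bound: the witnesses $\{z_i, z_i'\}$ constructed in the proof of Lemma~\ref{vertexdegree} account for only $2(\ell - 1)$ distinct vertices, leaving a gap of $\ell - 2$. To close it I would analyze the pairwise intersections $C_i \cap C_j$ for $i \neq j$ using three ingredients in combination: first, $\{x_1\} \cup (C_i \cap C_j)$ is a clique, so $|C_i \cap C_j| \leq \omega(G) - 1 \leq k - \ell - 1$; second, any adjacent pair $\{u, v\} \subseteq C_i \cap C_j$ yields, via an iteration of Lemma~\ref{outside} or Claim~\ref{missingclique}, a $K_\ell$ containing $x_1$ but entirely outside $N[x_i]$, furnishing new neighbors of $x_1$ distinct from those already counted; third, in the most degenerate overlap configurations, Lemma~\ref{kempechain} produces a generalized Kempe chain whose color pattern on a $(k - \ell)$-coloring of $G - L$ violates Lemma~\ref{deg}. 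I expect this counting — pinning down exactly how much the $C_i$'s can overlap given claw-freeness and the clique bound — to be the technical heart of the argument.
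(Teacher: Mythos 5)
Your setup matches the paper's: contradiction, Lemma~\ref{avoid} to get a clique of vertices $x$ with $\chi(G[N(x)]) \leq k-\ell-1$ via Lemma~\ref{chrom}, the claw-free upper bound $d(x) \leq 2(k-\ell-1)$, and the realization that the target is the sharpened lower bound $d(x) \geq k+3\ell-5$. The first half of your argument (the range $k \leq 4\ell-2$ via Lemma~\ref{vertexdegree}) is correct. But the second half is where the theorem actually lives, and there you have a genuine gap: you never establish $\bigl|\bigcup_{i=2}^{\ell} C_i\bigr| \geq 3\ell-4$. You explicitly defer the extra $\ell-2$ vertices to an unspecified analysis of the overlaps $C_i \cap C_j$, and the three ``ingredients'' you list do not visibly combine into a count: the clique bound $|C_i \cap C_j| \leq \omega(G)-1 \leq k-\ell-1$ is far too weak to limit overlap, the claim that adjacent pairs in $C_i \cap C_j$ ``furnish new neighbors distinct from those already counted'' is exactly the assertion that needs proof, and the appeal to Lemma~\ref{kempechain} is not attached to any concrete coloring argument. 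As it stands, nothing rules out the $C_i$'s overlapping so heavily (beyond the $2(\ell-1)$ witnesses from Lemma~\ref{vertexdegree}) that the union stays below $3\ell-4$, so the sharpened bound --- and hence the theorem for $4\ell-1 \leq k \leq 5\ell-4$ --- is not proved.

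For comparison, the paper gets $d(x) \geq k+3\ell-5$ without any union bound over sets indexed by $L$: it fixes a nonadjacent pair $u,v \in N(S)$ (which exists unless $G \cong K_k$) and counts inside $N(x)$ relative to $u$. The edge $xu$ lies in a $K_\ell$, so the $K_2$ case of Lemma~\ref{vertexdegree} gives $d(x,u) \geq k-\ell+3(\ell-2) = k+2\ell-6$; the clique $S' = G[(V(S)\setminus\{y\}) \cup \{v\}]$ contains $x$, avoids $u$, and has the vertex $v \notin N[u]$, so Lemma~\ref{outside} gives $|N(x)\setminus N[u]| \geq \ell$; and these two sets, together with $\{u\}$, are pairwise disjoint subsets of $N(x)$, yielding $d(x) \geq (k+2\ell-6) + \ell + 1 = k+3\ell-5$ in one step. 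The disjointness is automatic because the split is ``inside $N[u]$ versus outside $N[u]$,'' which is precisely what your decomposition relative to $L$ lacks. If you want to salvage your route, you would need to prove the union bound on the $C_i$'s; otherwise, redirect the count through a non-neighbor $u$ as above.
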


\begin{proof}
Suppose on the contrary, $G \not \cong K_{k}$. 

Then, by Lemma~\ref{avoid}, there is a copy $S$ of $K_{\ell}$ such that for every $x \in V(S)$, there is some copy $L$ of $K_{\ell}$ such that $L \not \subseteq N[x]$. 

Moreover, for any $x \in V(S)$, Lemma~\ref{chrom} implies $\chi(G[N(x)]) \leq k - \ell - 1$, and hence by claw-freeness $d(x) \leq 2 (k - \ell - 1)$.

Let $u, v$ be a nonadjacent pair inside $N(S)$. Note that $G$ is a $K_k$ if no such pair exists. Then for every $x \in V(S)$, by Lemma~\ref{vertexdegree}, $d(u, x) \geq k - \ell + 3(\ell - 2) \geq k + 2 \ell - 6$. 

Take $y \in V(S)$, let $S' = G[V(S) - \{y\} \cup \{v\}]$. Then, $S'$ is not in the neighborhood of $u$ and does not contain $u$, but contains every $x \in S - \{y\}$.  Thus, by Lemma~\ref{outside}, every $x \in V(S) - \{y\}$ has at least $\ell$ neighbors outside $N[u]$. Thus, since $x$ is adjacent to $u$, we have that 
\begin{align*}
    d(x) &\geq d(x, u) + |N(x) - N[u]| + |\{u\}|\\
    &\geq k + 2 \ell - 6 + \ell + 1\\
    &\geq k + 3 \ell - 5.
\end{align*}
Combining this with the upper bound on $d(x)$, we have

\begin{align*}
    k + 3 \ell - 5 &\leq 2(k - \ell - 1)\\
    k + 3 \ell - 5 & \leq 2k  - 2 \ell - 2\\
    5 \ell - 3 &\leq k.
\end{align*}

Yet, by assumption, we have that $k \leq 5 \ell - 4$, a contradiction. So $G \cong K_k$. 
\end{proof}

\section{Proof of Theorem~\ref{main3}}\label{clawfree12}

We will now prove the following statement: 
\begin{theorem}[Restatement of Theorem~\ref{main3}]
Let $G$ be a triangle-critical, claw-free graph of chromatic number twelve. Then $G \cong K_{12}$
\end{theorem}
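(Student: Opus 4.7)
The plan is to assume $G \not\cong K_{12}$ and derive a contradiction by pushing the method of Theorem~\ref{main2} one step beyond its natural boundary. At $\ell = 3$, Theorem~\ref{main2} gives the result for $\chi \leq 11$, so $\chi = 12$ is exactly the first case where the competing inequalities $d(x) \leq 2(k-\ell-1)$ and $d(x) \geq k + 3\ell - 5$ meet at equality, forcing every inequality in the proof of Theorem~\ref{main2} to become tight. Invoking Lemma~\ref{avoid}, fix $S \cong K_4$ such that each $x \in V(S)$ has some $L_x \cong K_3$ with $L_x \not\subseteq N(x)$ and $x \notin V(L_x)$. Then Lemma~\ref{chrom} gives $\chi(G[N(x)]) \leq 8$, and claw-freeness ($\alpha(N(x)) \leq 2$) gives $d(x) \leq 16$. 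The argument of Theorem~\ref{main2} yields $d(x) \geq 16$, so $d(x) = 16$ for every $x \in V(S)$. Lemma~\ref{usefullemma} further gives that $G$ is $K_{10}$-free.

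Next, fix a non-adjacent pair $u, v \in N(S)$; such a pair exists since $V(S) \cup N(S)$ being a clique would, together with lower bounds on $|N(S)|$ derived from Lemma~\ref{vertexdegree}, force a $K_{10}$ and conclude $G \cong K_{12}$ by Lemma~\ref{usefullemma}. For each $x \in V(S)$, Lemma~\ref{outside} combined with the tight equality $d(x) = 16$ forces $x$ to have exactly three neighbors outside $N[u]$, which form a triangle $T_x^u$. Since $v \in N(x) \setminus N[u]$, we must have $v \in T_x^u$; write $T_x^u = \{v, a_x, b_x\}$. Symmetrically, $T_x^v = \{u, c_x, d_x\}$. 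Since $\alpha(N(x)) \leq 2$, no $w \in N(x)$ can avoid both $N[u]$ and $N[v]$ (as $\{u, v, w\}$ would be independent), so $\{a_x, b_x\} \cap \{c_x, d_x\} = \emptyset$ and $|N(x) \cap N(u) \cap N(v)| = 10$. Pushing further, a direct count yields $|N(\{u, x_i, x_j\})| \leq 11$ for any triangle $\{u, x_i, x_j\}$ with $x_i, x_j \in V(S)$, to be combined with the lower bound $|N(\{u, x_i, x_j\})| \geq 9$ from Lemma~\ref{deg}.

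The final step is to extract a contradiction from this rigid structure. Two strategies are natural. The first is a Ramsey/clique strategy: each set $W_x := N(x) \cap N(u) \cap N(v) \setminus V(S)$ has size $7$ and $\alpha \leq 2$, so contains a $K_3$ by $R(3,3) = 6$; combining such triangles across the four $W_x$'s together with $V(S)$ and $u$ should produce a $K_{10}$, contradicting Lemma~\ref{usefullemma}. The second is a coloring strategy: fix a 9-coloring of $G - L$ for $L = \{u, x_1, x_2\}$; since $|N(L)| \leq 11$ and all 9 colors appear on $N(L)$, the coloring palette is very constrained, and Lemma~\ref{kempechain} produces a Kempe chain that contradicts criticality. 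The hardest part is this last step: since no single inequality at $k = 5\ell - 3 = 12$ is strictly violated, the contradiction must emerge by combining several tight constraints simultaneously, making the case analysis considerably more delicate than the $\chi = 8$ proof (where $|N(T)| = 5$ cleanly forced $N(T) \cong C_5$). Here $|N(T)|$ ranges over $\{9, 10, 11\}$ with $\alpha \leq 2$ and $\omega \leq 6$, permitting many configurations that must each be eliminated.
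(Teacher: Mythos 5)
There is a genuine gap: your proposal never actually reaches a contradiction. Everything up to the tightness bookkeeping ($d(x)=16$, exactly three neighbors of $x$ outside $N[u]$ forming a triangle through $v$, $|N(x)\cap N(u)\cap N(v)|=10$) is a reasonable opening and is close in spirit to how the paper begins (the paper also exploits $d(a)\le 2(k-\ell-1)=16$ from Lemma~\ref{chrom} plus claw-freeness, together with Lemma~\ref{vertexdegree} and Lemma~\ref{outside}). But the theorem's entire difficulty lies in the step you defer. Your two ``natural strategies'' are stated as hopes, not arguments, and the first has a concrete hole: a triangle inside $W_{x_1}\subseteq N(x_1)\cap N(u)\cap N(v)$ is only guaranteed to be adjacent to $x_1$, $u$ and $v$ (and $u\not\sim v$), so with $x_1,u$ it yields at most a $K_6$; nothing you have established makes triangles taken from different $W_x$'s adjacent to one another, nor adjacent to the remaining vertices of $S$, so no $K_{10}$ and hence no appeal to Lemma~\ref{usefullemma} follows. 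The second strategy (``Lemma~\ref{kempechain} produces a Kempe chain that contradicts criticality'') does not say what the chain is or what it violates; Lemma~\ref{kempechain} by itself never contradicts criticality, it only forces the existence of certain colored paths, which must then be played off against a specific structure. A smaller but real issue: your nonadjacent pair $u,v$ is taken in $N(S)$ for $S\cong K_4$, and its existence needs $d(S)\ge 6$ if $S\cup N(S)$ were a clique, which you have not established; the paper instead takes the nonadjacent pair in the common neighborhood of a triangle, where Lemma~\ref{deg} gives degree at least $9$.

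For comparison, the paper's proof does not stop at the tight inequalities but converts them into an exact local configuration: it first shows some triangle $T=\{a,b,c\}$ has $d(T)$ exactly $9$ (if every triangle through $ab$ had degree at least ten, then $d(a,b)\ge 13$ and Lemma~\ref{outside} would force $d(a)\ge 17>16$); it then produces six vertices $x,x',y,y',z,z'$ in the pairwise common neighborhoods minus the closed neighborhood of the third vertex, a companion triangle $T'=\{a',b',c'\}$ with $d(T')=9$, shows that in any $9$-coloring of $G-T$ the nine vertices $x,x',y,y',z,z',a',b',c'$ receive distinct colors, uses Lemma~\ref{kempechain} to force all edges $v_iv_j$ between the corresponding color representatives in $N(T)$, and finally shows that $G-T'$ admits no $9$-coloring, contradicting triangle-criticality. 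None of this construction (or any substitute for it) appears in your proposal, so as written it is a plan for a proof rather than a proof.
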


\begin{proof}

    Assume on the contrary that $G \not \cong K_{12}$. Let $a$ be a vertex of $G$ that lies on a triangle $L$ such that there is a triangle $L'$ in $G -\{a\}$ not fully contained in $N(a)$. By Lemma~\ref{avoid}, such a vertex exists. Let $b,d$ be a nonedge in $N(L)$. Since $G \not \cong K_{12}$, such a nonedge exists. Note that there is a triangle containing $a$, which does not lie inside $N(b)$ and does not contain $b$. Recall that by claw-freeness and Lemma~\ref{chrom}, $d(a), d(b)\leq 2(12 - 3 - 1) \leq 16$.
    
    If every triangle containing $ab$ has degree at least ten,  then, following the proof of Lemma~\ref{vertexdegree}, we have that $d(a, b) \geq 13$. As by Lemma~\ref{outside}, $|N(a) - N[b]| \geq 3$, we have that $$d(a) = d(a, b) + |N(a) - N[b]| + |\{b\}| \geq 17,$$ a contradiction. Thus, by Lemma~\ref{deg}, there is a $c \in N(a, b)$ such that $d(a, b, c) = 9$. Let $T = G[\{a, b, c\}]$. Now, by Lemma~\ref{vertexdegree}, we have that $d(a, b), d(a, c), d(b, c) \geq  12$. Thus, there are six vertices, $x, x', y, y', z, z'$ such that $x, x' \in N(a, b)  - N[c]$, $y, y' \in N(a, c) - N[b]$, and $z, z' \in N(b, c) - N[a]$. 

    Now, we have that for each vertex among $\{a, b, c\}$ there is a triangle not containing it such that misses its neighborhood, so by Lemma~\ref{chrom} and claw-freeness, $d(a), d(b) ,d(c) \leq 16$. 

    Let us examine the triangle $S = G[\{a, b, x\}]$. Since $x \not \in N[c]$, $S$ has a neighbor outside $N[c]$ by Lemma~\ref{missneigh}; without loss of generality, we may assume it is $x'$. Also by Lemma~\ref{missneigh}, $a, x, x'$ has a common neighbor outside the $N[c]$, let us call it $a'$. Suppose $a' \sim b$. Then, $d(a,b) \geq 13$. But, then by Lemma~\ref{outside}, $d(a) \geq 17$, a contradiction. So $a' \not \sim b$. Similar logic gives a vertex $b'$ that is adjacent to $x, x', b$ but not $a, c$.

    Now, let us examine $S' = G[\{a, c, y\}]$. Since $y \not \in N[b]$, $S'$ has a neighbor outside $N[b]$ by Lemma~\ref{missneigh}; without loss of generality, we may assume it is $y'$. Furthermore, $a, y, y'$ has a common neighbor outside $N[b]$, let us call it $a''$. As above $a'' \not \sim c$. If $a'' \neq a'$, then $d(a) \geq 17$, a contradiction. Thus, $a'' = a'$.

    Following this logic to its natural conclusion, we have found that $x \sim x'$, $y \sim y'$, and $z \sim z'$, and the existence of three vertices $a', b', c'$ such that $a' \sim a, x, x', y, y'$; $a' \not \sim b, c$; $b' \sim b, x, x', z, z'$; $b' \not \sim a, c$; $c' \sim c, y, y', z, z'$; and $c' \not \sim a, b$.

    Note that the edge $aa'$ lies on a triangle, so $d(a, a') \geq 12$. In particular, $N(a, a')$ contains $x, x', y, y'$ and eight vertices among $N(T)$. Similar logic holds for $bb'$ and $cc'$. In particular $N_{N(T)}(a', b', c') \geq 6$. Fix $w \in N_{N(T)}(a', b', c')$. If $a' \not \sim b'$, then $G[\{w, a', b', c\}]$, would be a claw, a contradiction. So $a' \sim b'$, and similar logic shows  $T' := G[\{a', b', c'\}]$ satisfies $T' \cong K_3$, as shown in Figure 1.

\begin{figure}[h!]\centering\label{fig1}
    \includegraphics[scale=0.80, page=3]{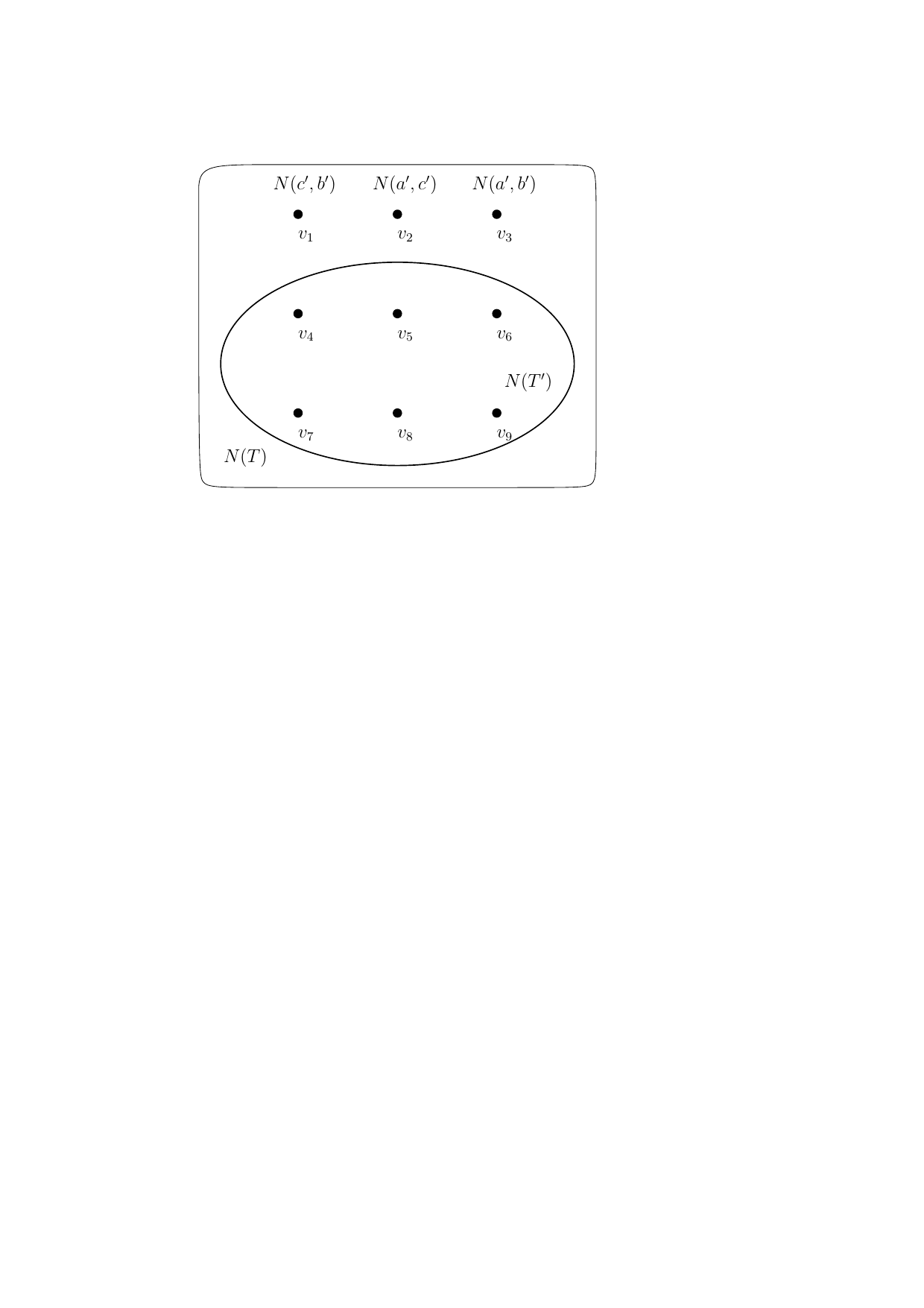}
    \caption{Some edges in $G[\{x, x', y, y', z, z'\} \cup T\cup T']$}
\end{figure}

    Now, $G - T$ is $9$-chromatic by triangle-criticality. Fix one such $9$-coloring $\phi$. Under $\phi$, $N(T)$ receives all nine colors by Lemma~\ref{deg}. Since $d_{N(T)}(a') \geq 8$, we have that $a'$ has exactly one non-neighbor in $N(T)$. Suppose inside $N(T)$, $a', b'$ share a common non-neighbor. Let $v$ be the non-neighbor of $a'$ in $N(T)$, and so under $\phi$, $\phi(v) = \phi(a')$. Under the assumption $a', b'$ are both nonadjacent to $v$, we have that $\phi(v) = \phi(b')$. But then, $\phi(b')= \phi(a')$, contradicting that $\phi$ is proper coloring. 

    This gives us a complete description of the connectivity between $N(T)$ and $T'$, as shown in Figure 2. 

\begin{figure}[h!]\centering\label{fig2}
    \includegraphics[scale=0.65, page=1]{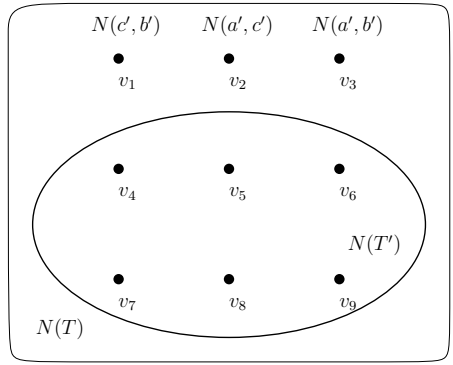}\caption{Structure of $N(T)$}
\end{figure}

    Let us now examine the edge $ax$. As it lies on a triangle $d(ax) \geq 12$. Since $ax$ has at most three neighbors among $\{x', y, y', z, z'\}$ and exactly two neighbors, $a', b$ among $\{b,c, a', b', c'\}$, we have that $ax$ have at least seven common neighbors among $N(T)$. Similarly, $d_{N(T)}(x'), d_{N(T)}(y), d_{N(T)}(y'), d_{N(T)}(z), d_{N(T)}(z') \geq 7$. 

    Note that under $\phi$, six colors appear among $x, x', y, y', z, z'$. Indeed, suppose that two of them received the same color, say without loss of generality $\phi(x) = \phi(y)$. Letting $w$ be the vertex in $N(T)$ receiving color $\phi(x)$, then $G[\{a, x, y, w\}]$ would be a claw, a contradiction.

    We seek to show that, under $\phi$, every vertex in $\{x, x', y, y', z, z', a', b', c'\}$ receives a distinct color.  Suppose on the contrary that one vertex among $x, x', y, y', z, z'$ under the coloring $\phi$ shares a color with one of $\{a', b', c'\}$. Without loss of generality, we may assume it is $z$ and $a'$. Let $v_i$ be the vertex in $N(T)$ such that $\phi(v_i) = \phi(z) = \phi(a')$. Now, as $z, a' \not \sim v_i$, we have that $d_{N(T)}(z, a') \geq 7$. If $d_{N(T)}(v_i) \geq 2$, then $N_{N(T)}(v_i, a', z) \neq \emptyset$, and so $G$ would a contain a claw, a contradiction. Thus,  $d_{N(T)}(v_i) \leq 1$. Now, $d(a, b, v_i) \geq 9$, so $|N(a) - N(c)| \geq d(a, b, v_i) - d_{N(T)}(v_i) \geq 8$. But, then $d(a) = d(a, c) + |N(a) - N(c)| \geq 12 + 8 > 16$, a contradiction. Thus, under $\phi$, every vertex in $\{x, x', y, y', z, z', a', b', c'\}$ receives a distinct color.

    Without loss of generality, assume $\phi(a') = 1, \phi(b') = 2, \phi(c') = 3, \phi(x) = 4, \phi(x') = 5, \phi(y) = 6, \phi(y') = 7, \phi(z) = 8, \phi(z') = 9$. For every $i \in [9]$, let $v_i \in N(T)$ be the unique vertex colored $i$ under $\phi$. 

\begin{claim}
    For all $i \in \{1, 2, 3\}$ and all $j \in \{4, 5, 6, 7, 8, 9\}$, $v_iv_j$ is an edge.
\end{claim}     For simplicity, let us first examine $i = 1, j = 4$. By Lemma~\ref{kempechain}, there is a path of order four from $b$ to $c$ where the second vertex receives color $1$ and the third vertex receives color $4$. Yet $b$ is adjacent to exactly one vertex of color $1$, $v_1$, and $c$ is adjacent to exactly one vertex of color $4$, $v_4$. So $v_1 v_4$ is an edge. Similar arguments complete this claim.

    Our final claim before our contradiction is that $d(T')= 9$. Now, as $T$ is a triangle that does not lie completely in any of their neighborhoods, by Lemma~\ref{chrom} and claw-free, we have that $d(a') , d(b') , d(c') \leq 16$. Suppose on the contrary that $d(T') \geq 10$. Now, $a'$ is adjacent to at least seven vertices that are not in $N(T')$, namely $a, v_2, v_3, b', c'$, and then at least two of $x, x', y, y'$, as $c'$ cannot be adjacent to both of $x, x'$ and $b'$ cannot be adjacent to both of $y, y'$, as then either $d(c') \geq 17$ or $d(b') \geq 17$ respectively. Indeed, if $c'$ were adjacent to both $x, x'$, then $c'$ would be adjacent to eight vertices in $N(T)$, $c, x, x', y, y', z, z'$ and $a', b'$. Yet then, $d(a') \geq 17$, so $d(T') = 9$.  
    
\begin{figure}\label{fig3}
    \includegraphics[scale=0.8, page=2]{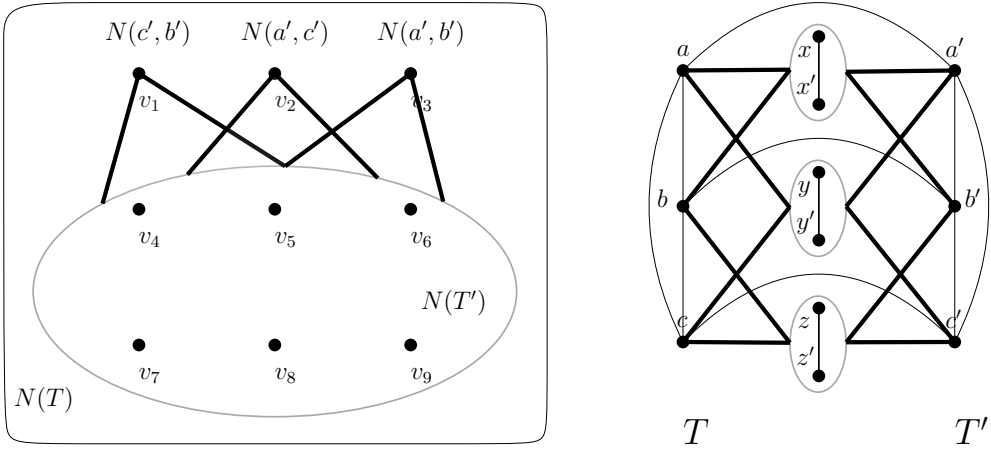}
    \caption{Key Structure of $N(a) \cup N(b) \cup N(c)$}
\end{figure}

We will now show that $\chi(G - T') \geq 10$, contradicting triangle-criticality. Suppose there is a $9$-coloring of $G - T'$, call it $\psi$. Then, by Lemma~\ref{deg}, all nine colors must appear in the $N(T')$. As there are only nine vertices, every vertex must get a distinct color. Suppose without loss of generality that $\psi(v_4) = 4, \psi(v_5) = 5, \psi(v_6) = 6, \psi(v_7) = 7, \psi(v_8) = 8, \psi(v_9) = 9$. Then, as each of $\{a, b, c\}$ is adjacent to all six of these vertices, and form a triangle, we may assume $\psi(a) = 1, \psi(b) = 2$, and $\psi(c) = 3$. Thus, all nine colors appear in the neighborhood of $v_1$, and so $\psi$ cannot be a proper coloring.

Therefore, $G \cong K_{12}$.

\end{proof}

\section{Concluding Remarks}

We note that if $G$ is a counterexample for the Erd\H{o}s-Lov\'asz Tihany conjecture for a pair $(s, t)$ with $s = 3$, then it must have a $K_3$. In particular, Stiebitz [Lemma $3.6$,  \cite{S2}] showed it must have a $K_4$. Thus, our main results reprove Erd\H{o}s-Lov\'asz Tihany for $(3, 3), (3, 4), (3, 5)$ and prove it for claw-free graphs for $(3, t)$ with $t \in \{6, 7, 8, 9, 10\}$. In particular, this leads to the open question: 
\begin{question}
Does any counterexample $G$ to the Erd\H{o}s-Lov\'asz Tihany Conjecture for a pair $(s, t)$ with $s, t \geq 4$ require $K_s$ to a be a subgraph of $G$?
\end{question}

\section*{Acknowledgements}

The authors would like to thank Alexander Clifton for valuable discussions regarding the presentation and content of this work.

\bibliography{bibliography}
\bibliographystyle{alpha}

\end{document}